\newtheorem{theorem}{Theorem}
\newtheorem{lemma}[theorem]{Lemma}
\newtheorem{proposition}[theorem]{Proposition}
\theoremstyle{definition}
\newtheorem{definition}{Definition}[section]
\theoremstyle{remark}
\newtheorem{remark}{Remark}[section]
\begin{document}
\title{Fibered Knots and Virtual Knots}
\author{Micah W. Chrisman and Vassily O. Manturov}
\begin{abstract} We introduce a new technique for studying classical knots with the methods of virtual knot theory. Let $K$ be a knot and $J$ a knot in the complement of $K$ with $\text{lk}(J,K)=0$. Suppose there is covering space $\pi_J: \Sigma \times (0,1) \to \overline{S^3\backslash V(J)}$, where $V(J)$ is a regular neighborhood of $J$ satisfying $V(J) \cap \text{im}(K)=\emptyset$ and $\Sigma$ is a connected compact orientable $2$-manifold.  Let $K'$ be a knot in $\Sigma \times (0,1)$ such that $\pi_J(K')=K$. Then $K'$ stabilizes to a virtual knot $\hat{K}$, called a virtual cover of $K$ relative to $J$. We investigate what can be said about a classical knot from its virtual covers in the case that $J$ is a fibered knot. Several examples and applications to classical knots are presented. A basic theory of virtual covers is established.
\end{abstract}
\keywords{virtual knot, fibered knot, applications of virtual knot theory, covering, parity}
\subjclass[2000]{57M25, 57M27}
\maketitle
\section{Introduction}
\subsection{Opening Remarks} \label{opening_remarks} By \emph{classical knot theory} we mean the study of knots and links in the $3$-sphere. By \emph{virtual knot theory} we mean the study knots and and links in thickened surfaces $\Sigma \times I$ modulo stabilization, where $\Sigma$ is compact orientable surface (not necessarily closed), and $I$ is the closed unit interval. The goal of the present paper is to study classical knots using the methods of virtual knot theory. To do this, we introduce the concept of a \emph{virtual cover} of a classical knot.
\newline
\newline
Suppose that $K$ is a knot and $J$ is a knot in the complement of $K$ satisfying $\text{lk}(J,K)=0$. Let $V(J)$ denote a regular neighborhood of $J$ such that $V(J) \cap K=\emptyset$. Furthermore, suppose that the complement of $J$ admits a covering space map $\pi_J: \Sigma \times (0,1) \to \overline{S^3 \backslash V(J)}$. Let $K'$ be a knot in $\Sigma \times (0,1)$ such that $\pi_J(K')=K$. The knot $K'$ stabilizes to a virtual knot $\hat{K}$, called a \emph{virtual cover of} $K$ \emph{relative to} $J$. The aim of the present paper is to learn what can be said about the classical knot $K$ from its virtual covers.
\newline
\newline 
When $J$ is a fibered knot and $\text{lk}(J,K)=0$, virtual covers of classical knots are guaranteed to exist. This is the case considered in the present paper, although the technique could be applied more generally (for example, by using virtually fibered knots \cite{walsh}). The precise definition of a fibered knot is given below. The precise definition of a virtual cover which will be used throughout the remainder of the paper is given immediately thereafter.

\begin{definition}[Fibered Knot, Fibered Triple]  A knot $J$ in $S^3$ is said to be \emph{fibered} \cite{bz,on_knots,rolfsen} if the knot complement $\overline{S^3 \backslash V(J)}$ fibers locally trivially over $S^1$.  Let $J$ be a fibered knot with given fibration $p:\overline{S^3\backslash V(J)} \to S^1$. Let $\Sigma=p^{-1}(z_0)$ for some $z_0 \in S^1$. The triple $(J,p,\Sigma)$ is called a \emph{fiber triple}.
\end{definition}

\begin{definition}[Virtual Cover] \label{defn_virt_cov} Let $K:S^1 \to S^3$ be a classical knot and $(J,p,\Sigma)$ a fiber triple such that $K$ is in $\overline{S^3\backslash V(J)}$ and $\text{lk}(J,K)=0$. There is an orientation preserving homeomorphism from the infinite cyclic cover $M_J$ of the complement of $J$ to $\Sigma \times (0,1)$. Let $\pi_J:M_J \to \overline{S^3\backslash V(J)}$ be the covering space map. Let $K':S^1 \to M_J$ be a knot in $M_J$ satisfying $\pi_J \circ K'=K$. The lift $K'$ can be considered as a knot in $\Sigma \times I$ via the inclusion map $\Sigma \times (0,1) \hookrightarrow \Sigma \times I$. Let $\hat{K}$ denote the virtual knot representing the stability class of $K'$ in $\Sigma \times I$. A virtual knot $\hat{K}$ obtained in this way is called a \emph{virtual cover of} $K$ \emph{relative to} $(J,p,\Sigma)$. 
\end{definition}

\noindent Our main focus is to construct examples of virtual covers and apply them to problems in classical knot theory. Indeed, we will give an example of a pair of figure eight knots $K_1$, $K_2$ in $S^3$ and a trefoil $T$ in $\overline{S^3\backslash (V(K_1)\cup V(K_2))}$ such that there is no ambient isotopy taking $K_1$ to $K_2$ fixing $T$. Similarly, we will give an example of an invertible knot which cannot be transformed to its inverse without ``moving'' a fibered knot in its complement. Another example is that of an unknot $K$ in $S^3$ having a non-trivial subdiagram $D$ which is reproduced in knots $K_0$ equivalent to $K$ in $\overline{S^3 \backslash V(J)}$, where $J$ is a fibered knot. The subdiagram is reproduced in the sense that there is a smoothing of a subset of crossings of $K_0$ which results in four valent graph that is isomorphic to $D$. 
\newline
\newline
\noindent Virtual covers thus provide a new way to study classical knots with virtual knot theory. It is distinct from the usual way in which classical knots are studied with virtual knots. Typically, classical knots are considered as a \emph{subset} of the set of virtual knots. The alternative approach advocated in the present paper allows us to exploit both the non-trivial ambient topology and the intrinsic combinatorial properties of virtual knots. Indeed, both the figure eight and unknot examples described above are established by applying \emph{parity} arguments to virtual covers. Any parity for classical knots is trivial \cite{IMN}, but we see that parity arguments for virtual covers of classical knots prove to be fruitful. It is also important to note that the technique introduced in this paper is distinct from the recent work of Carter-Silver-Williams \cite{cart_sil_will}, where universal covers of surfaces are used to construct invariants of knots in thickened surfaces and virtual knots.
\newline
\newline
\noindent In addition to the examples, we give a brief theory of virtual covers. The theory will be applied to interpreting the examples. We prove that when knots are given in a special form (called \emph{special Seifert form} below), virtual covers are essentially unique. Next we investigate the relationship between virtual covers of equivalent classical knots.  If the link $J \sqcup K$ is unlinked, we show every virtual cover of $K$ is classical. It is also proved that when two equivalent knots $K_1$, $K_2$ are given in special Seifert form relative to the same fibered triple $(J,p,\Sigma)$ and the ambient isotopy taking one to the other is the identity on $V(J)$, then their virtual covers are equivalent virtual knots.  Lastly, we prove that every virtual knot is a virtual cover of some classical knot relative to some fibered triple $(J,p,\Sigma)$.
\newline
\newline
\noindent The outline of the present paper is as follows. A brief review of the four interpretations of virtual knots is given in Section \ref{sec_review}. Section \ref{virt_cov_knots} provides the technical details behind a brief theory of virtual covers. In Section \ref{sec_spec_seif}, we define special Seifert forms. The aim of Section \ref{sec_seif_form_cov} is to show that special Seifert forms have unique virtual covers relative to a given fibered triple. Section \ref{sec_prin_invar} explores the relationship between virtual covers of equivalent classical knots.  Section \ref{sec_apps} applies this theory to the three examples discussed above. Lastly, it is proved in Section \ref{sec_every_virt} that every virtual knot is a virtual cover of some classical knot relative to some fibered triple.

\subsection{Brief Review of Virtual Knot Theory} \label{sec_review} We will need four models of virtual knots: virtual knot diagrams in $\mathbb{R}^2$, knots in thickened oriented surfaces, knot diagrams on oriented surfaces (or equivalently, \emph{abstract knots} \cite{kamkam}), and Gauss diagrams. After all of the models have been described, we briefly review how one can translate one model into another. 
\newline
\newline
We begin with the virtual knot diagram interpretation. A \emph{virtual knot diagram} \cite{KaV,GPV} is an immersion $K:S^1 \to \mathbb{R}^2$ such that each double point is marked as either a \emph{classical crossing} (see top left of Figure \ref{band_pass_fig}) or a \emph{virtual crossing} (see top right of Figure \ref{band_pass_fig}). A classical crossing is the typical overcrossing/undercrossing that we have from the knot theory of embeddings $S^1 \to \mathbb{R}^3$. A virtual crossing is denoted with a small circle in the image around the double point. Two virtual knot diagrams are said to be \emph{equivalent} if they may be obtained from one another by a finite sequence of planar isotopies and the \emph{extended Reidemeister moves} (see Figure \ref{exrmoves}). Each move in the figure depicts a small ball $B \approx D^2$ (where $\approx$ means ``is homeomorphic to'') in $\mathbb{R}^2$ in which the virtual knot diagram is changed. Outside of $B$, the move coincides with the identity function $\mathbb{R}^2\backslash B \to \mathbb{R}^2 \backslash B$. 

\begin{figure}[h]
\[
\begin{array}{|c|c|c|} \hline 
\underline{\text{Reidemeister 1:}} & \underline{\text{Reidemeister 2:}} & \underline{\text{Reidemeister 3:}} \\
 \begin{array}{ccc} 
\begin{array}{c} \scalebox{.14}{\psfig{figure=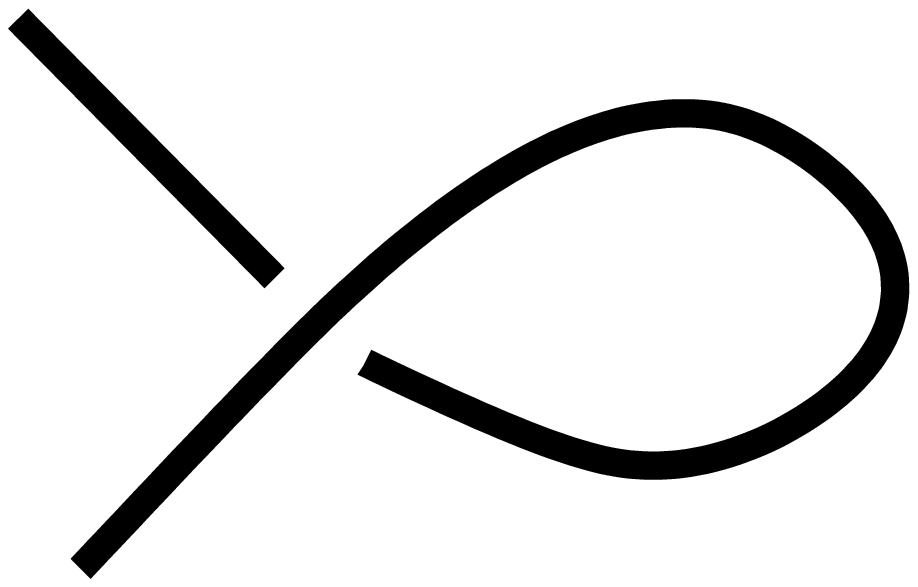}} \end{array} & \leftrightharpoons & \begin{array}{c} \scalebox{.14}{\psfig{figure=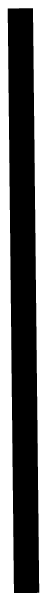}} \end{array} \\ 
 \end{array}
 &
 \begin{array}{ccc} 
\begin{array}{c} \scalebox{.14}{\psfig{figure=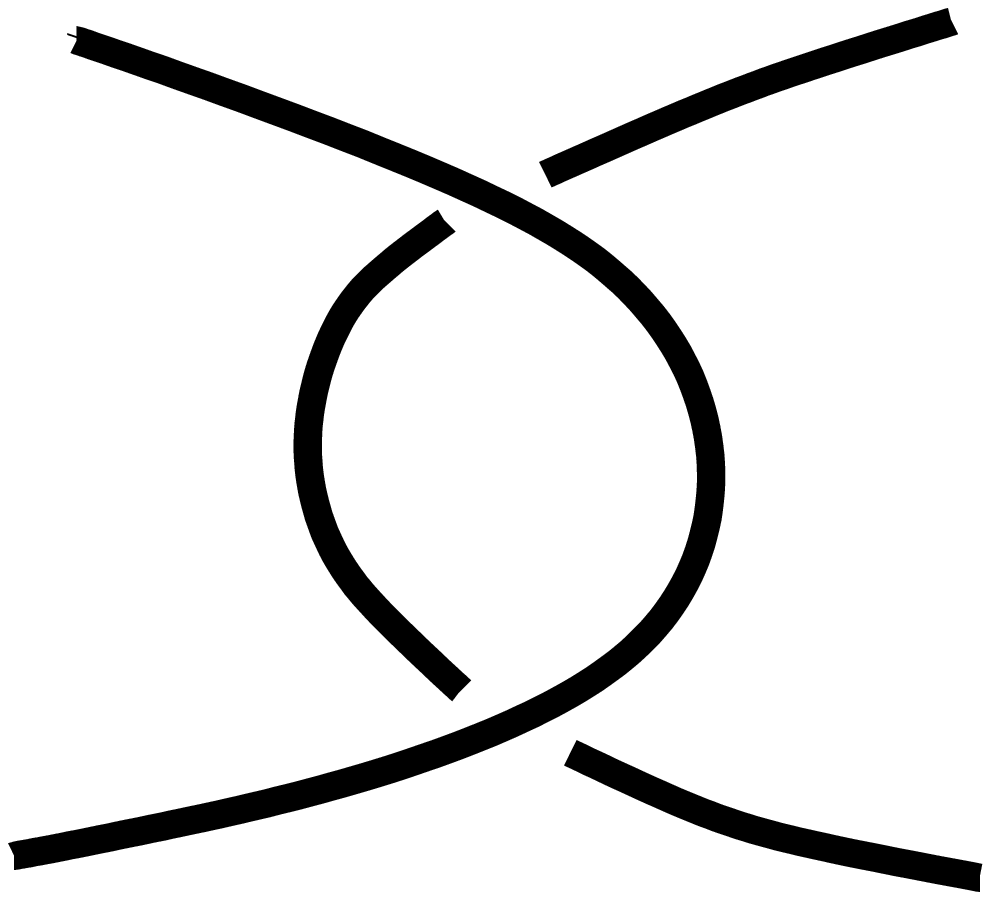}} \end{array} & \leftrightharpoons & \begin{array}{c} \scalebox{.14}{\psfig{figure=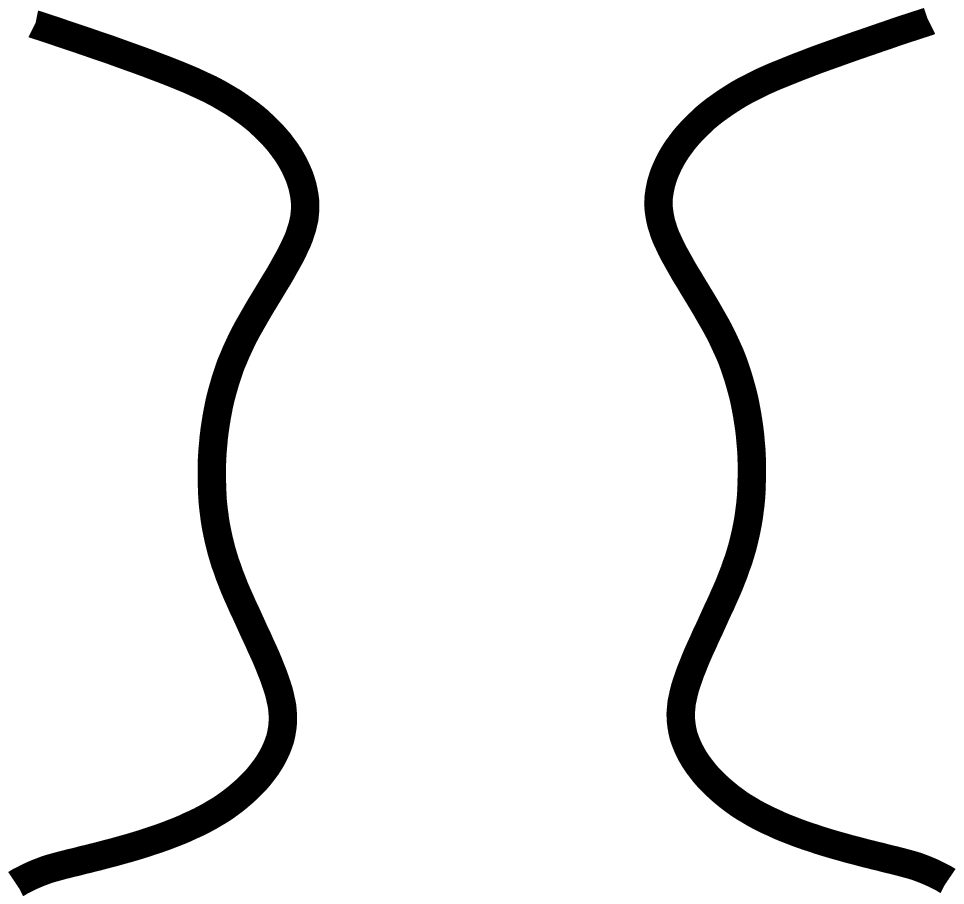}} \end{array} \\  
 \end{array}
 &
  \begin{array}{ccc}
\begin{array}{c} \scalebox{.14}{\psfig{figure=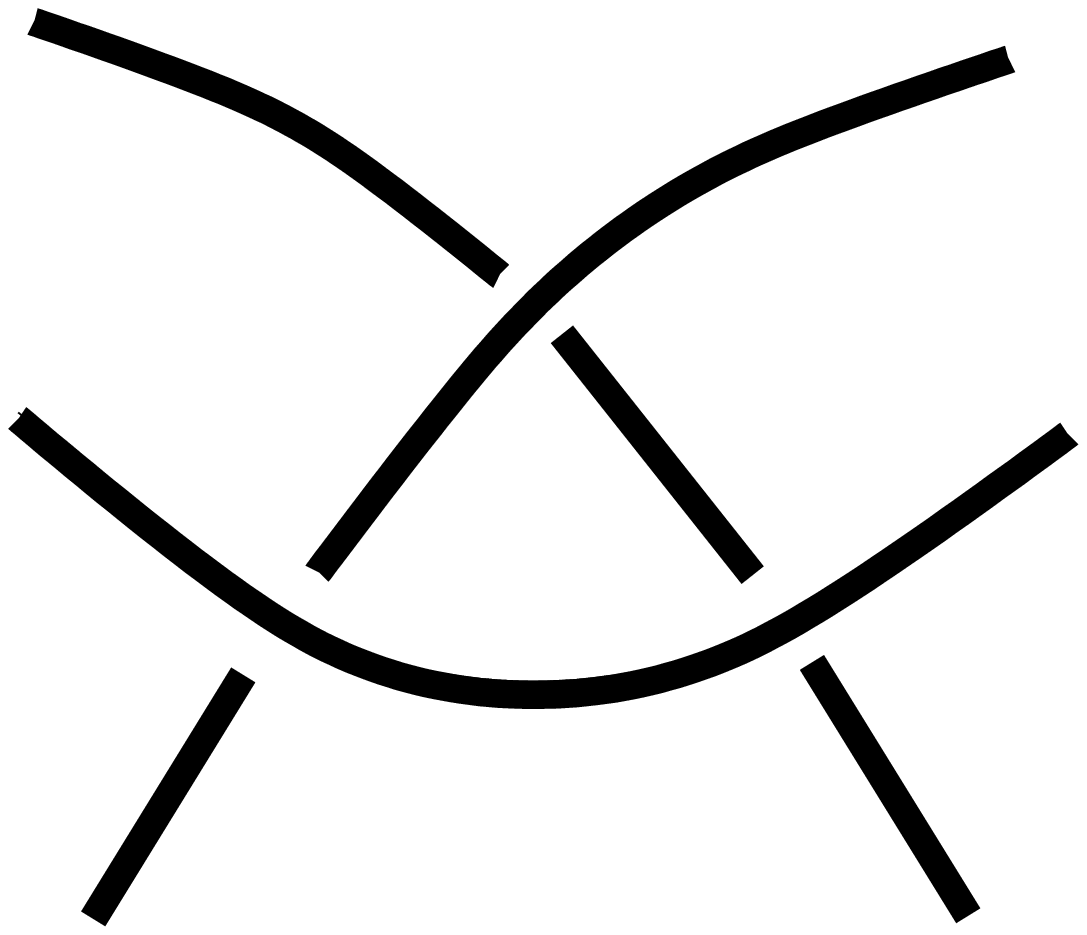}} \end{array} & \leftrightharpoons & \begin{array}{c} \scalebox{.14}{\psfig{figure=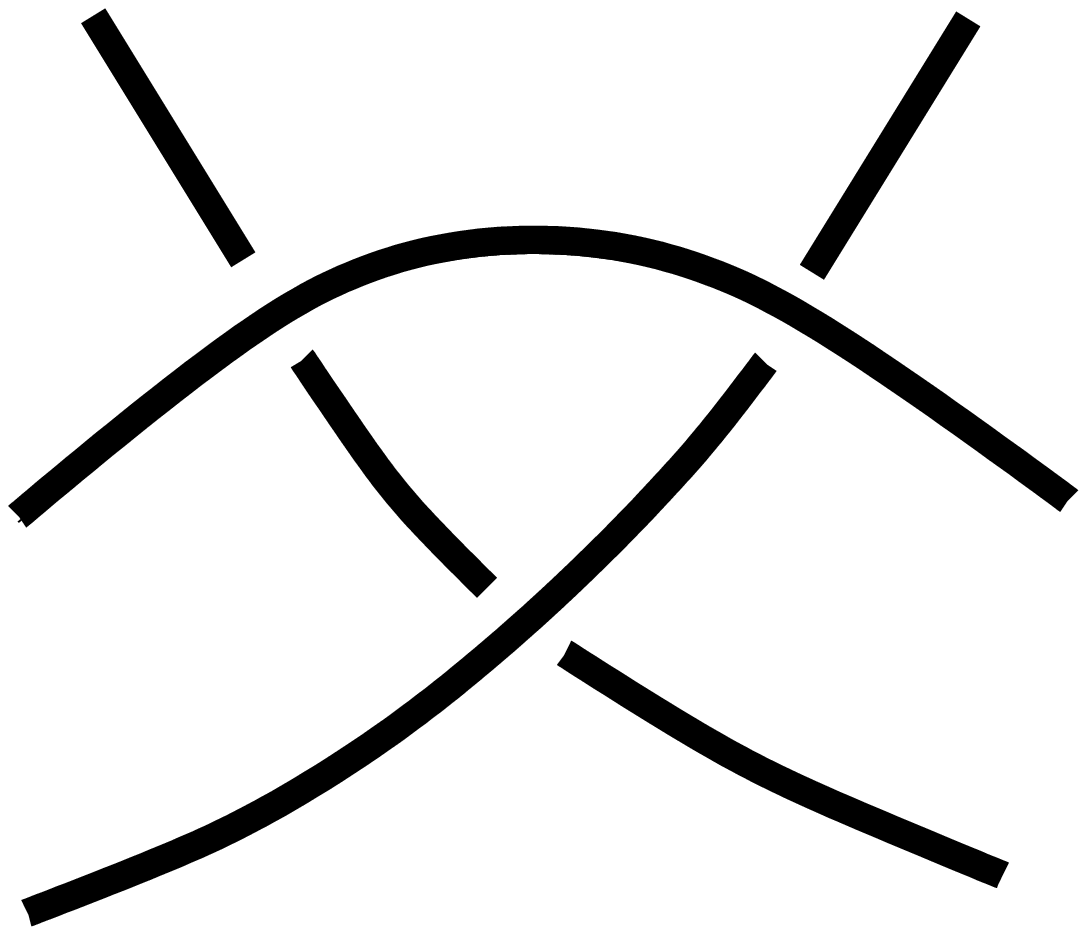}} \end{array} \\ 
 \end{array} \\ 
\hspace{1cm} & & \\ \hline 
\underline{\text{Virtual 1:}} & \underline{\text{Virtual 2:}} & \underline{\text{Virtual 3:}} \\
 \begin{array}{ccc} 
\begin{array}{c} \scalebox{.14}{\psfig{figure=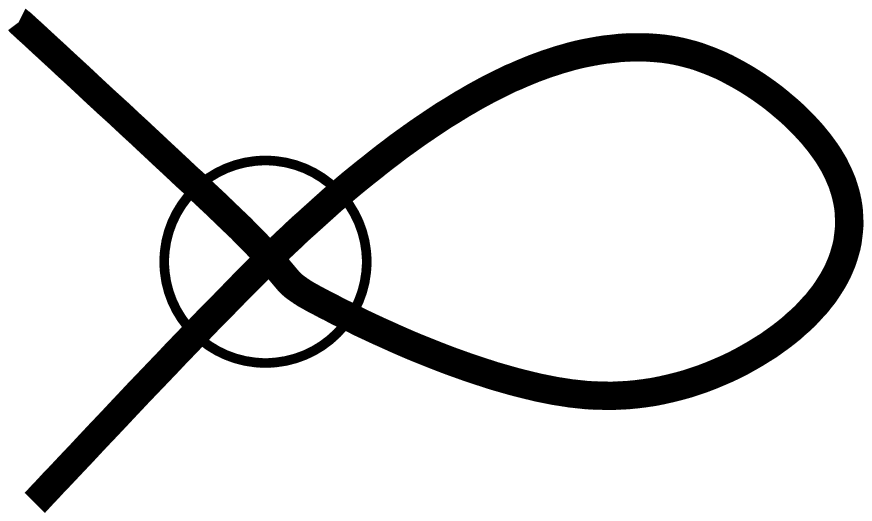}} \end{array} & \leftrightharpoons & \begin{array}{c} \scalebox{.14}{\psfig{figure=omega1_3.eps}} \end{array} \\ 
 \end{array}
 &
 \begin{array}{ccc} 
\begin{array}{c} \scalebox{.14}{\psfig{figure=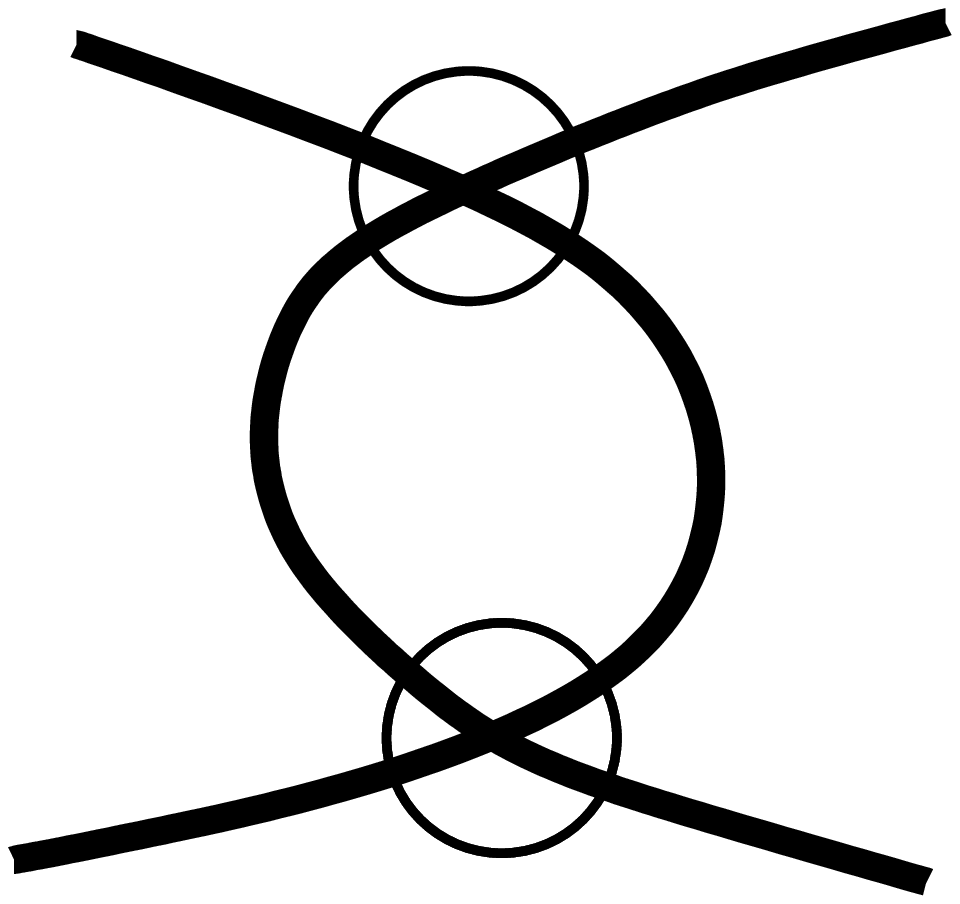}} \end{array} & \leftrightharpoons & \begin{array}{c} \scalebox{.14}{\psfig{figure=omega2_2.eps}} \end{array} \\  
 \end{array}
 &
  \begin{array}{ccc}
\begin{array}{c} \scalebox{.14}{\psfig{figure=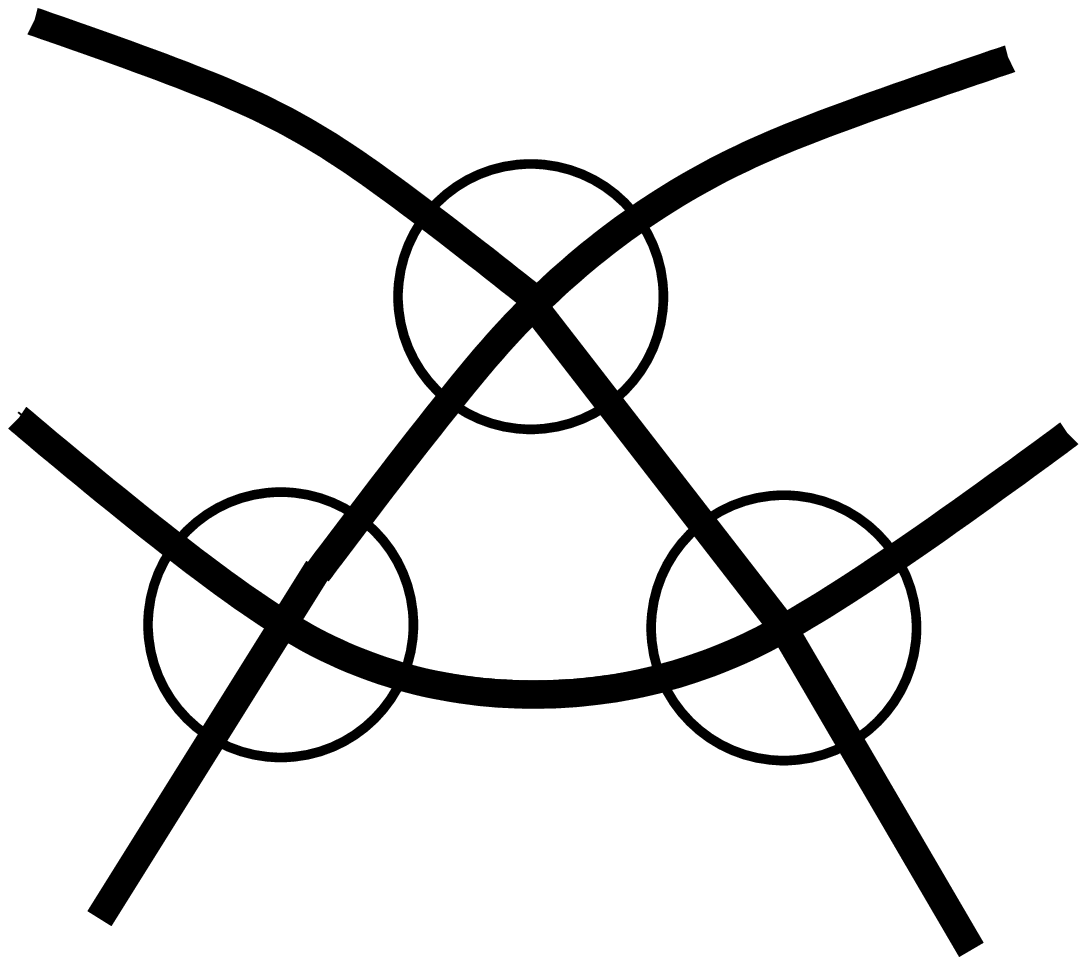}} \end{array} & \leftrightharpoons & \begin{array}{c} \scalebox{.14}{\psfig{figure=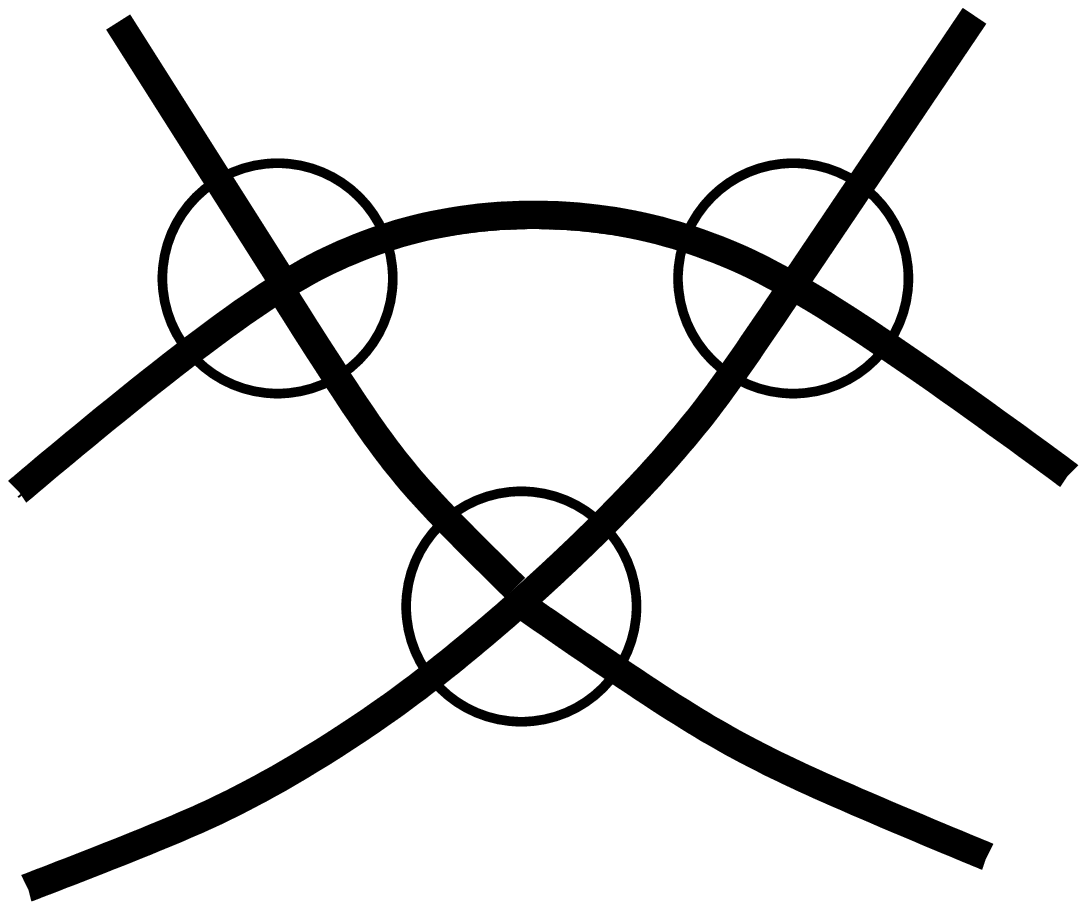}} \end{array} \\ 
 \end{array} \\ 
\hspace{1cm} & & \\ \hline
\multicolumn{3}{|c|}{\underline{\text{Virtual 4:}}} \\
\multicolumn{3}{|c|}{\begin{array}{ccc} 
\begin{array}{c} \scalebox{.14}{\psfig{figure=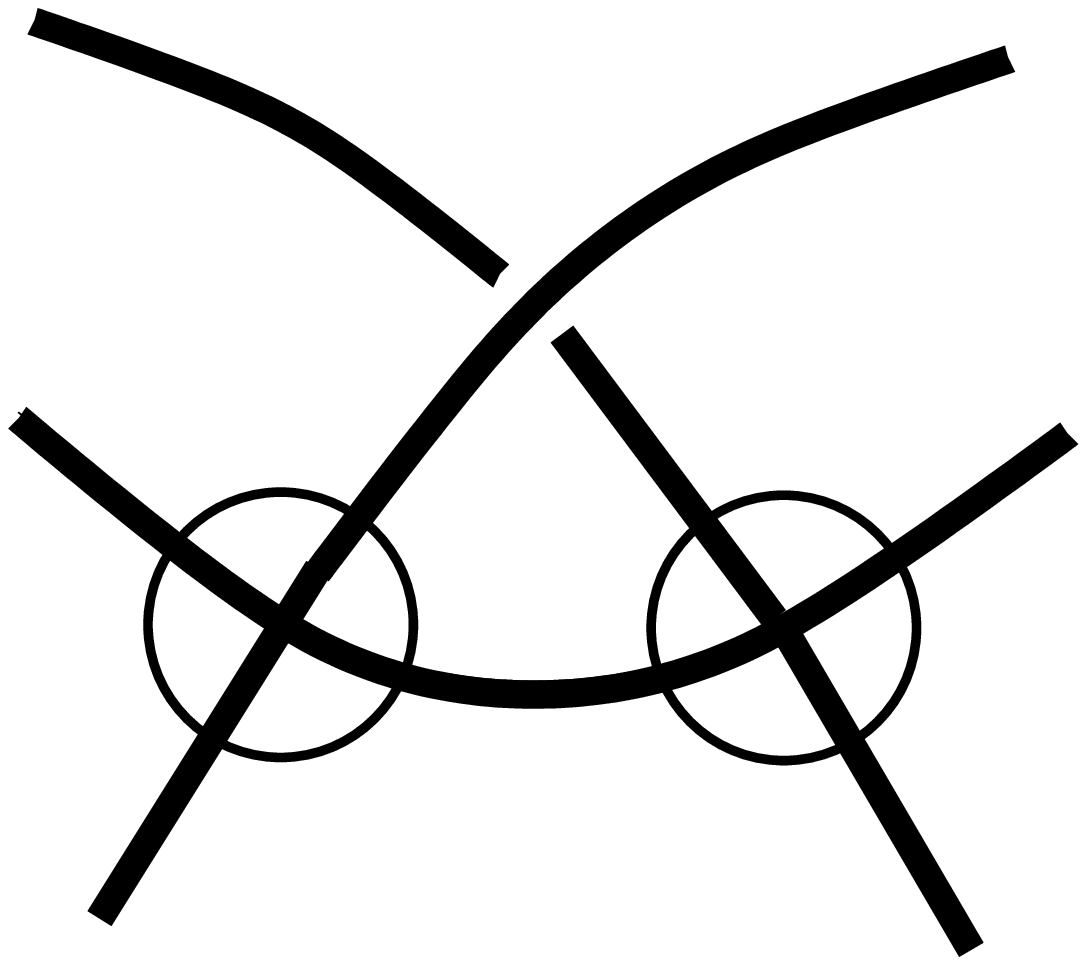}} \end{array} & \leftrightharpoons & \begin{array}{c} \scalebox{.14}{\psfig{figure=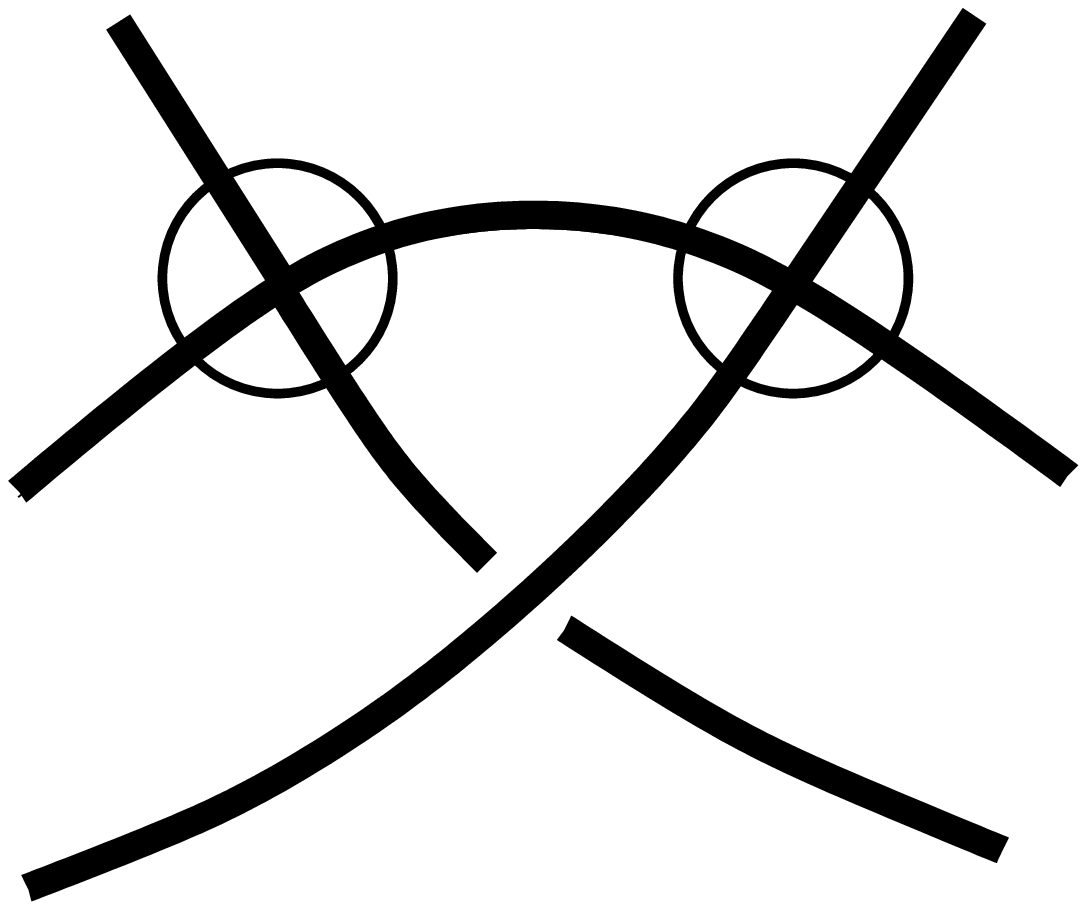}} \end{array} \\  
 \end{array}} \\ \hline
\end{array}
\]
\caption{The Extended Reidemeister Moves.} \label{exrmoves}
\end{figure}
\hspace{1cm}
\newline
The second interpretation of virtual knots is that they are \emph{knots in thickened surfaces modulo stabilization and destabilization}. Let $\Sigma$ be a compact oriented surface which is not necessarily closed. A \emph{knot in} $\Sigma \times I$ is a smooth embedding $K: S^1 \to \text{int}(\Sigma \times I)$. Two knots $K_1$, $K_2$ in $\Sigma \times I$ are said to be \emph{equivalent} if there is a smooth ambient isotopy $H:(\Sigma \times I) \times I \to \Sigma \times I$ mapping $K_1$ to $K_2$ which satisfies the property that $H_t|_{\partial(\Sigma \times I)}=\text{id}_{\partial(\Sigma \times I)}$ for all $t \in I$.  
\newline
\newline
Let $\sigma$ be a smooth embedded one-dimensional sub-manifold of $\Sigma$. A \emph{stabilization} of a knot $K$ in $\Sigma \times I$ is cutting $\Sigma \times I$ along a $\sigma \times I$ which has the property that $(\sigma \times I) \cap \text{im}(K)=\emptyset$. If $\sigma$ is homeomorphic to $S^1$, we subsequently attach a thickened disk $D^2 \times I$ along each parallel copy of $\sigma \times I$ by identifying $\sigma \times I$ with $(\partial D^2) \times I$. In addition, any connected components produced by cutting in a stabilization which do not contain $\text{im}(K)$ are discarded. A \emph{destabilization} is the inverse operation of a stabilization. The result of a stabilization is a new knot $K_1$ in the thickened surface $\Sigma_1\times I$, where $\Sigma_1$ is homeomorphic to the surface obtained from cutting $\Sigma$ along $\sigma$ and possibly deleting some components.
\newline
\newline
A knot $K_1$ in $\Sigma_1 \times I$ and a knot $K_2$ in $\Sigma_2\times I$ are said to be \emph{stably equivalent} if there is a finite sequence of equivalencies of knots in thickened surfaces, orientation preserving homeomorphisms of surfaces, and stabilizations/destabilizations which take $K_1$ to $K_2$. Let $\mathscr{TS}$ denote the set of stable equivalence classes of knots in thickened surfaces. It was proved in \cite{kamkam,CKS,kuperberg} that that there is a one-to-one correspondence $\kappa:\mathscr{TS} \to \mathscr{VK}$ between stability classes of knots in thickened surfaces and virtual knots.
\newline
\newline
The third interpretation of virtual knots is in terms of \emph{abstract knots} \cite{kamkam,CKS}. An \emph{abstract knot diagram} is a knot diagram on a compact oriented surface $\Sigma$, where $\Sigma$ is not necessarily closed.  Abstract knots on $\Sigma$ are considered up to \emph{Reidemeister equivalence} on $\Sigma$, i.e. by a sequence of Reidemeister 1, 2, and 3 moves as in the top of Figure \ref{exrmoves}. An abstract knot $\tau_1$ on $\Sigma_1$ and an abstract knot $\tau_2$ on $\Sigma_2$ are said to be \emph{elementary equivalent} if there is a compact oriented surface $\Sigma_3$ and orientation preserving embeddings $i_1:\Sigma_1 \to \Sigma_2$ and $i_2:\Sigma_2 \to \Sigma_3$ such that $i_1 \circ \tau_1$ and $i_2 \circ \tau_2$ are Reidemeister equivalent as diagrams on $\Sigma_3$. An abstract knot $\tau_1$ on $\Sigma_1$ and an abstract knot $\tau_2$ on $\Sigma_2$ are said to be \emph{stably equivalent} if there is finite sequence of elementary equivalences taking $\tau_1$ on $\Sigma_1$ to $\tau_2$ on $\Sigma_2$. 
\newline
\newline
The last interpretation of virtual knots is in terms of \emph{Gauss diagrams}. Let $K:S^1 \to \mathbb{R}^2$ be an oriented virtual knot diagram. A classical crossing $X_i$ of $K$ is a pair of points $x_{i_1},x_{i_2} \in S^1$ such that $K(x_{i_1})=X_{i}=K(x_{i_2})$. Connect the points $x_{i_1}$ and $x_{i_2}$ by a chord of $S^1$ in $D^2$. The image of a small arc in $S^1$ about $x_{i_j}$ goes to either the undercrossing or overcrossing arc of $K$ in $\mathbb{R}^2$. The chord between $x_{i_1}$ and $x_{i_2}$ is made into an arrow by directing the chord from the overcrossing arc to the undercrossing arc. Finally, we mark the sign of each classical crossing near one of $x_{i_1}$, $x_{i_2}$ with a symbol: $\oplus$ for positive crossings or $\ominus$ for negative crossings. The diagram just created is called a \emph{Gauss diagram} of $K$. Gauss diagrams are considered \emph{equivalent} up to orientation preserving homeomorphisms of $S^1$ which preserve the direction and sign of the arrows. Two Gauss diagrams are said to be \emph{Reidemeister equivalent} if they may be obtained from one another by a sequence of Gauss diagram analogs of the Reidemeister 1, 2, and 3 moves (see Figure \ref{exrmoves} and \cite{Ost,GPV}). Note that one can also find a Gauss diagram of an oriented knot diagram on a surface using the same procedure.
\newline
\newline
There is a one-to-one correspondence between any of the four interpretations \cite{kuperberg, CKS, kamkam}. The key idea in constructing the one-to-one correspondence is the \emph{band-pass presentation}. For simplicity, we describe the construction in the piecewise linear category. Let $K$ be a virtual knot diagram. A disk is drawn in the plane in a neighborhood of each classical crossing (called a \emph{cross}). Each virtual crossing corresponds to a pair of non-intersecting bands in $\mathbb{R}^3$. The bands and crosses are connected by regular neighborhoods of the regular points of $K$ in $\mathbb{R}^2$. The resulting oriented compact surface $\Sigma_K$ embedded in $\mathbb{R}^3$ is the band-pass presentation of $K$. The diagram $K'$ on $\Sigma_K$ is obtained by drawing the crossing on each ``cross'', the arcs on each ``pass'' and the regular points of $K$ on each of the regular neighborhoods (see Figure \ref{band_pass_fig}). Conversely, if you are given an oriented knot diagram $\tau$ on a surface, a corresponding virtual knot $\kappa(\tau)$ can be found by simply finding its Gauss diagram and taking the corresponding oriented virtual knot.

\begin{figure}
\[
\begin{array}{|c|c|} \hline
\underline{\text{Classical crossings get a ``cross'':}} & \underline{\text{Virtual crossing arcs go to passing bands:}} \\
\begin{array}{ccc}
\begin{array}{c}
\scalebox{.6}{\psfig{figure=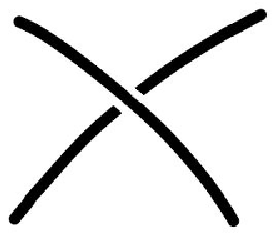}}
\end{array}
& \to \to 
&
\begin{array}{c}
\scalebox{.6}{\psfig{figure=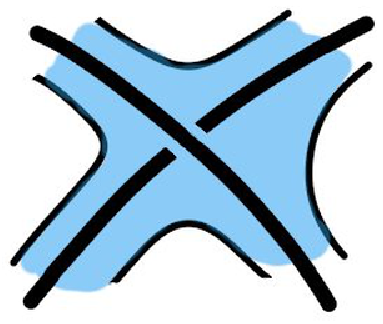}}
\end{array}
\end{array}
&
\begin{array}{ccc}
\begin{array}{c}
\scalebox{.6}{\psfig{figure=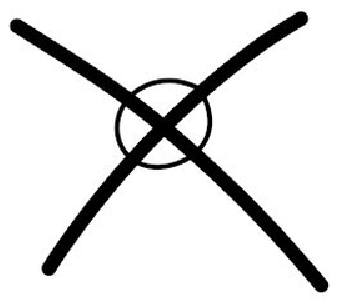}}
\end{array}
& \to \to 
&
\begin{array}{c}
\scalebox{.6}{\psfig{figure=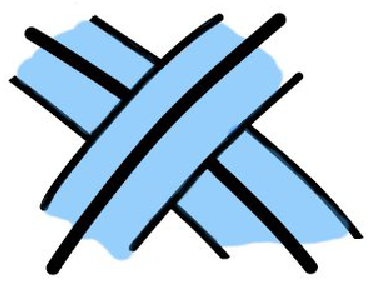}}
\end{array} \\ 
\end{array} \\ \hline
\multicolumn{2}{|c|}{\underline{\text{Crosses and passing bands are connected by regular nbhds of regular points:}}} \\ 
\multicolumn{2}{|c|}{\begin{array}{c}\scalebox{.75}{\psfig{figure=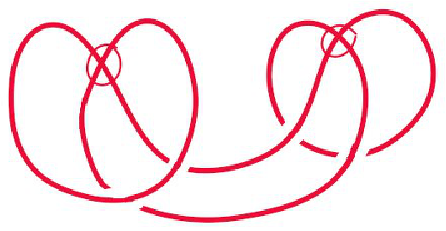}}\end{array} \to \to \to \to \begin{array}{c}\scalebox{.75}{\psfig{figure=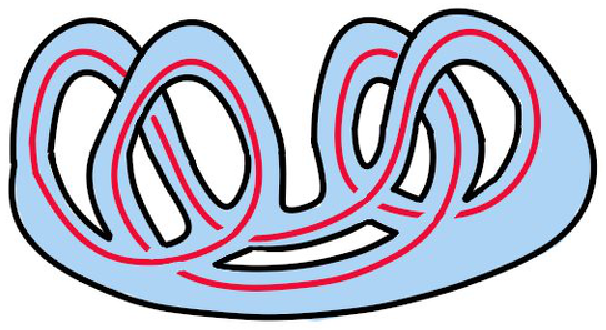}}\end{array} } \\ \hline
\end{array}
\]
\caption{Constructing the band-pass presentation of a virtual knot.} \label{band_pass_fig}
\end{figure}

\section{Theory of Virtual Coverings of Knots}\label{virt_cov_knots}
\subsection{Special Seifert Form} \label{sec_spec_seif} Virtual coverings of a knot $K$ relative to a fibered triple $(J,p,\Sigma)$ can be easily determined when the link $J \sqcup K$ in $S^3$ is presented in \emph{special Seifert form}. A special Seifert form consists, roughly, of a Seifert surface $\Sigma$ of $J$ such that the image of $K$ is contained in $\Sigma$ except in finitely many $3$-balls. To define this more precisely, we will begin with the definition of a \emph{Seifert surface of a knot}.

\begin{definition}[Seifert surface] Let $M$ be a $3$-manifold and $J$ a knot in $M$. A \emph{Seifert surface of J} is an embedded (p.l. or smooth) compact orientable $2$-manifold $\Sigma$ in $M$ such that $\text{im}(J)=\partial \Sigma$.  
\end{definition}

\begin{remark} For a fibration of a fibered knot $J$, every fiber $\Sigma$ may be considered a Seifert surface of $J$ by identifying $J$ with $V(J) \cap \Sigma$. Each fiber is of minimal genus \cite{bz}. The Seifert surface obtained in this way is unique up to isotopy in $S^3$ \cite{whitten}.
\end{remark}

\begin{figure}[htb]
\[
\begin{array}{ccc}
\begin{array}{c}
\scalebox{.75}{\psfig{figure=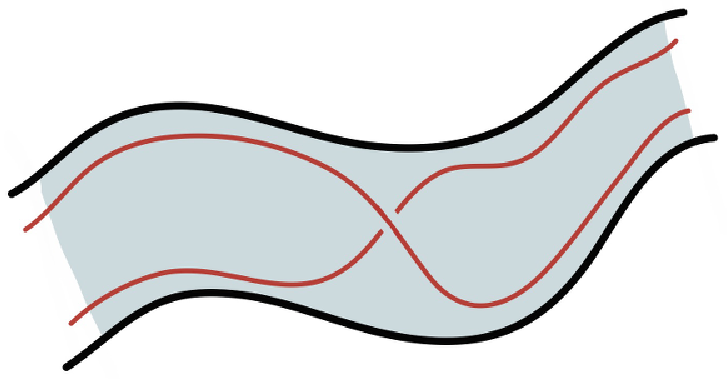}} \end{array} &\to\to &
\begin{array}{c}
\scalebox{1}{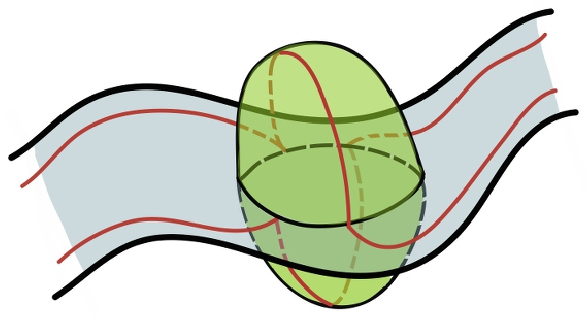}\end{array}
\end{array}
\]
\caption{Part of a knot diagram on a surface (left) and its corresponding special Seifert form (right).}\label{prop_band_fig}
\end{figure}

\begin{definition}[Special Seifert Form $(K;J,\Sigma)$] Let $L=J \sqcup K$ be a two component link in a orientable, compact, connected, (p.l. or smooth) $3$-manifold $M$. Let $\Sigma$ be a Seifert surface of $J$. Suppose that there are $3$-cells $B_1,\ldots,B_n$ in $M$ such that each $B_i$ is in a coordinate neighborhood of some point $z_i \in \text{int}(\Sigma)$ and such that the following properties are satisfied for all $i$, $1 \le i \le n$.
\begin{enumerate}
\item $B_i \cap \Sigma$ is a closed disk $D_i \subset \text{int}(\Sigma)$. 
\item $B_i \cap K$ consists of two disjoint arcs $a_{i_1}$, $a_{i_2}$ in $\partial B_i$. 
\item $(a_{i_1} \cup a_{i_2}) \cap D_i$ is the set of endpoints of the arcs $a_{i_1}$ and $a_{i_2}$ and the interiors of the two arcs are contained in different connected components of $B_i \backslash D_i$ (see the right hand side of Figure \ref{prop_band_fig}).
\item $K \subset \Sigma \cup \bigcup_{i=1}^n B_i$.
\item $K \cap \Sigma$ is a union of a finite number of pairwise disjoint closed intervals.
\end{enumerate}
In this case, we will say that $L$ is in \emph{special Seifert form}. A special Seifert form of $L$ is denoted $L=(K;J,\Sigma)$.
\end{definition}

\begin{remark} \label{remark_lk_0_1} It follows directly from the definition that a special Seifert form $(K;J,\Sigma)$ has the property that $\text{lk}(J,K)=0$. For a combinatorial argument of this observation, see Remark \ref{remark_lk_0_2} below.  
\end{remark}

\noindent Given a special Seifert form, one can find a knot diagram on a surface.  To do so consistently, we define the upper and lower hemisphere of each given $3$-cell in the definition of a special Seifert form.

\begin{definition}[Upper/Lower Hemisphere] Let $(K;J,\Sigma)$ be a special Seifert form for $J \sqcup K$. Suppose also that $\Sigma$ is smooth and oriented. Let $N \approx \Sigma \times (-1,1)$ be an open tubular neighborhood of $\text{int}(\Sigma)$ such that $\Sigma$ is identified with $\Sigma \times \{0\}$ and such that for each given $3$-ball $B$ from the definition of special Seifert form, we have that $N \cap B \approx D^2 \times (-1,1)$. We assume that the homeomorphism between $N$ and $\Sigma \times (-1,1)$ is orientation preserving, where $(-1,1)$ is given the standard orientation. Define $N^+$($N^{-1}$) to be the component of $N\backslash \Sigma$ corresponding to $\Sigma \times (0,1)$ (resp. $\Sigma \times (-1,0)$). If $B_i$ is a $3$-ball from the definition of $(K;J,\Sigma)$, the \emph{upper (lower) hemisphere of} $B_i$ \emph{relative to} $N$ is the component of $B_i \backslash (B_i \cap \Sigma)$ which intersects $N^+$(resp. $N^{-}$). 
\end{definition}

\begin{definition}[Diagram of Special Seifert Form] Let $(K;J,\Sigma)$ be a special Seifert form with set of given $3$-cells $\{B_i=U_i \cup L_i \cup D_i:1 \le i \le n\}$, where each $U_i$ ($L_i$) is the upper (resp. lower) hemisphere of $B_i$ relative to some open tubular neighborhood $N$ of $\text{int}(\Sigma)$. In each $D_i=B_i \cap \Sigma$, we connect the two points of $a_{i_1} \cap D_i$ by a smooth arc $b_{i_1}$ in $D_i$ and the two points $a_{i_2} \cap D_i$ by a smooth arc $b_{i_2}$ in $D_i$. We may assume that $b_{i_1}$ and $b_{i_2}$ intersect exactly once transversally. The smooth $b_{i_j}$ which connects the endpoints of the arc in the upper hemisphere of $B_i$ is designated as the over-crossing arc and $b_{i_{3-j}}$ is designated as the under-crossing arc. We create a \emph{knot diagram} $[K;J,\Sigma]$ \emph{on} $\Sigma$ \emph{of the special Seifert form} $(K;J,\Sigma)$ by connecting the arcs $b_{i_1},b_{i_2}$ and the arcs of $K \cap \Sigma$ (and smoothing appropriately).
\end{definition}

\begin{remark} Diagrams $[K;J,\Sigma]$ on $\Sigma$ of special Seifert forms $(K;J,\Sigma)$ are well-defined in the sense that any two diagrams are equivalent as knot diagrams on $\Sigma$.
\end{remark}

\subsection{Special Seifert Forms and Virtual Covers} \label{sec_seif_form_cov} In the present section, it is proved that for a given fibered triple $(J,p,\Sigma)$ and knot $K$ having special Seifert form $(K;J,\Sigma)$, the virtual covers relative to $(J,p,\Sigma)$ are unique up to equivalence of virtual knots.  The lemma also provides a simple method by which to find this unique virtual cover.

\begin{lemma} \label{special_ok} Suppose $\hat{K}$ is a virtual cover of $K$ relative to $(J,p,\Sigma)$ and that $(K;J,\Sigma)$ is in special Seifert form in $\overline{S^3 \backslash V(J)}$. Then $\hat{K} \leftrightharpoons \kappa([K;J,\Sigma])$ as virtual knots.
\end{lemma}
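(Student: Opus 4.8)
The plan is to produce the lift $K'$ explicitly inside a neighborhood of the fiber and to recognise it as the standard thickened-surface model of the diagram $[K;J,\Sigma]$. First I would use local triviality of $p$ near the fiber: choose a closed arc $A\subset S^1$ with $z_0\in\text{int}(A)$ over which $p$ is trivial and set $N=p^{-1}(\text{int}\,A)$, a tubular neighborhood of $\Sigma=p^{-1}(z_0)$ with product structure $N\approx\Sigma\times(-1,1)$, where $\Sigma\times\{0\}$ is the fiber, oriented compatibly with the ambient orientation. Since $M_J$ is (after the fixed orientation-preserving homeomorphism $M_J\cong\Sigma\times(0,1)$) the pullback of the universal cover $\mathbb{R}\to S^1$ along $p$, the preimage $\pi_J^{-1}(N)$ is a disjoint union of sheets $\widetilde N_m$ ($m\in\mathbb{Z}$), each carried homeomorphically onto $N$ by $\pi_J$. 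I would take the homeomorphism $M_J\cong\Sigma\times(0,1)$, the product structure on $N$, and the tubular neighborhood appearing in the definitions of upper/lower hemisphere and of $[K;J,\Sigma]$ to be mutually compatible.

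Next, since $(K;J,\Sigma)$ is in special Seifert form we have $K\cap\Sigma\subset\Sigma\subset N$, and each $3$-cell $B_i$ lies in a coordinate neighborhood of a point of $\text{int}(\Sigma)$, so after shrinking the $B_i$ (an isotopy of $K$ supported near them) we may assume $B_i\subset N$ for all $i$ and hence $K\subset N$. Because $\text{lk}(J,K)=0$, the lift $K'$ with $\pi_J\circ K'=K$ is a closed loop; it lies in $\pi_J^{-1}(N)=\bigsqcup_m\widetilde N_m$, and being connected it lies in a single sheet $\widetilde N_0$. Then $\pi_J|_{\widetilde N_0}\colon\widetilde N_0\to N$ is a homeomorphism carrying $K'$ onto $K$; moreover, since $\pi_J\circ\tau=\pi_J$ for every deck transformation $\tau$, a different choice of lift is carried by $\pi_J$ onto the \emph{same} subset $K\subset N$, so no ambiguity arises. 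Using the compatibility of the product structures, $\hat{K}$ is therefore the stable class of ``$K$ regarded as a knot in $N\approx\Sigma\times(-1,1)\hookrightarrow\Sigma\times I$.''

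It then remains to recognise this knot as the standard thickening of the diagram $[K;J,\Sigma]$ on $\Sigma$ (over-arcs pushed above $\Sigma\times\{0\}$, under-arcs pushed below), whose stable class is $\kappa([K;J,\Sigma])$ by the correspondence between diagrams on a surface and knots in a thickened surface recalled in Section~\ref{sec_review}. Outside the $B_i$, $K$ already lies in $\Sigma\times\{0\}$. Inside $B_i$, the arc $a_{i_j}$ lies in the upper or lower hemisphere, that is, in the half-ball $B_i\cap N^{+}$ or $B_i\cap N^{-}$, which meets $\Sigma$ in $D_i$; since any two embedded arcs in a $3$-ball with the same endpoints are isotopic rel those endpoints, $a_{i_j}$ can be isotoped, within that half-ball, to an arc lying over the smooth arc $b_{i_j}\subset D_i$ at constant height $\pm\varepsilon$, with ``upper hemisphere $=$ over'' by construction. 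Performing these local isotopies and smoothing turns $K$ into exactly the standard thickening of $[K;J,\Sigma]$, and applying $\kappa$ gives $\hat{K}\leftrightharpoons\kappa([K;J,\Sigma])$.

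The part I expect to be fiddliest is the bookkeeping of the first step: ensuring that the homeomorphism $M_J\cong\Sigma\times(0,1)$, the product structure on $N$, the chosen sheet, and the neighborhood used to build $[K;J,\Sigma]$ are mutually and orientation-compatible, so that ``upper hemisphere'' corresponds to the over-strand consistently on both sides. The two places where the hypotheses are genuinely used are the local triviality of $p$ (to obtain an evenly covered neighborhood of the fiber, forcing $K'$ into a single sheet) and $\text{lk}(J,K)=0$ (so that $K$ lifts to a closed loop); the rest is routine.
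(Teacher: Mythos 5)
Your argument is correct and arrives at the same place by a somewhat different route than the paper. The paper never moves $K$: it lifts the fiber to a copy $i_{x_0}(\Sigma)\subset M_J$ through a chosen basepoint, notes that each $3$-cell $B_j$ sits in an evenly covered neighborhood (automatic for a tame $3$-cell, since a slightly larger open ball is simply connected and hence evenly covered), lifts the $B_j$, observes that the lift $K_{x_0}$ lies in $i_{x_0}(\Sigma)\cup\bigcup_j B_{j,x_0}$, and concludes by comparing the Gauss diagram of the lifted configuration with that of $[K;J,\Sigma]$. You instead push $K$ entirely into a single evenly covered product neighborhood $N\approx\Sigma\times(-1,1)$ of the fiber, so that any lift is a homeomorphic copy of the pair $(N,K)$ in one sheet, and then identify that copy with the standard thickening of $[K;J,\Sigma]$ by local isotopies in the half-balls. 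This buys an explicit homeomorphism of pairs in place of the Gauss-diagram comparison, but it costs an auxiliary isotopy of $K$ (shrinking the $B_i$ into $N$), and you should say why that isotopy does not change the virtual cover: it is supported in balls disjoint from $V(J)$, so it lifts to an ambient isotopy of $M_J$ --- a special case of Lemma~\ref{isotopies_lift}, which is logically independent of the present lemma, so there is no circularity. Two smaller points: the general statement ``any two embedded arcs in a $3$-ball with the same endpoints are isotopic rel endpoints'' is false (properly embedded arcs can be knotted), but it is harmless here because each $a_{i_j}$ lies in $\partial B_i$ and is therefore boundary-parallel, hence unknotted in its half-ball, which is the only case you use; and the orientation bookkeeping you flag (compatibility of the trivialization of $M_J$, the product structure on $N$, and the tubular neighborhood defining upper/lower hemispheres, so that ``upper'' matches ``over'' rather than producing the flipped diagram) is genuinely needed, though the paper's own proof is no more explicit about it, so your level of detail matches the source.
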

\begin{proof} The covering space $M_J$ of $\overline{S^3\backslash V(J)}$ may be considered as the induced bundle of the exponential map $\text{exp}:\mathbb{R} \to S^1$, defined by $\text{exp}(t)=e^{2\pi i t}$, and the fibration $p:\overline{S^3\backslash V(J)} \to S^1$. This gives the following commutative diagram \cite{rolfsen}.
\[
\xymatrix{M_J  \ar[r]^{p'} \ar[d]_{\pi_J} & \mathbb{R} \ar[d]^{\text{exp}} \\
\overline{S^3\backslash V(J)} \ar[r]_p & S^1
}
\]
There is a $z_0 \in S^1$ such that $\Sigma=p^{-1}(z_0)$. Let $i:\Sigma \to \overline{S^3\backslash V(J)}$ denote the inclusion. Let $y_0 \in \text{im}(K) \cap \Sigma$. Let $x_0 \in \pi_J^{-1}(y_0)$.  There is a lift $i_{x_0}:(\Sigma,y_0) \to (M_J,x_0)$. Let $t_0=K^{-1}(x_0)$ be the basepoint of $S^1$. Since $\hat{K}$ is a virtual cover of $K$ relative to $(J,p,\Sigma)$, we have by Definition \ref{defn_virt_cov} that $\text{lk}(J,K)=0$. Hence $K:S^1 \to \overline{S^3\backslash V(J)}$ lifts to a simple closed curve $K_{x_0}:(S^1,t_0) \to (M_J,x_0)$ \cite{rolfsen}, mapping the the basepoint of $S^1$ to the basepoint of $M_J$. The lift $K_{x_0}$ must also be smoothly embedded, and hence we have that $K_{x_0}$ is a knot in $M_J$.
\newline
\newline
Let $\{B_j:1 \le j \le n\}$ denote the set of $3$-cells in the special Seifert form $(K;J,\Sigma)$. We may assume that each $3$-cell $B_j$ is sufficiently small that it is contained in a neighborhood $C_j$ which is evenly covered by $\pi_J$. Hence, for all $j$, $\pi_J^{-1}(B_j)$ is a disjoint union of $3$-cells in $M_J$. Let $B_{j,x_0}$ denote the unique 3-cell in this disjoint union constituting $\pi_J^{-1}(B_j)$ such that $B_{j,x_0} \cap i_{x_0}(\Sigma) \ne \emptyset$. Then $K_{x_0}(S^1) \subset i_{x_0}(\Sigma) \cup \bigcup_j \partial B_{j,x_0}$. It follows that a Gauss diagram for $[K_{x_0};\partial (i_{x_0}(\Sigma)),i_{x_0}(\Sigma)]$ is given by a Gauss diagram for $[K;J,\Sigma]$. Thus, the virtual cover corresponding to $K_{x_0}$ is equivalent as a virtual knot to $\kappa([K;J,\Sigma])$ (see Figure \ref{special_ok_fig}).
\end{proof}

\begin{figure}[htb]
\[
\xymatrix{
\begin{array}{c} \scalebox{.35}{\epsfig{figure=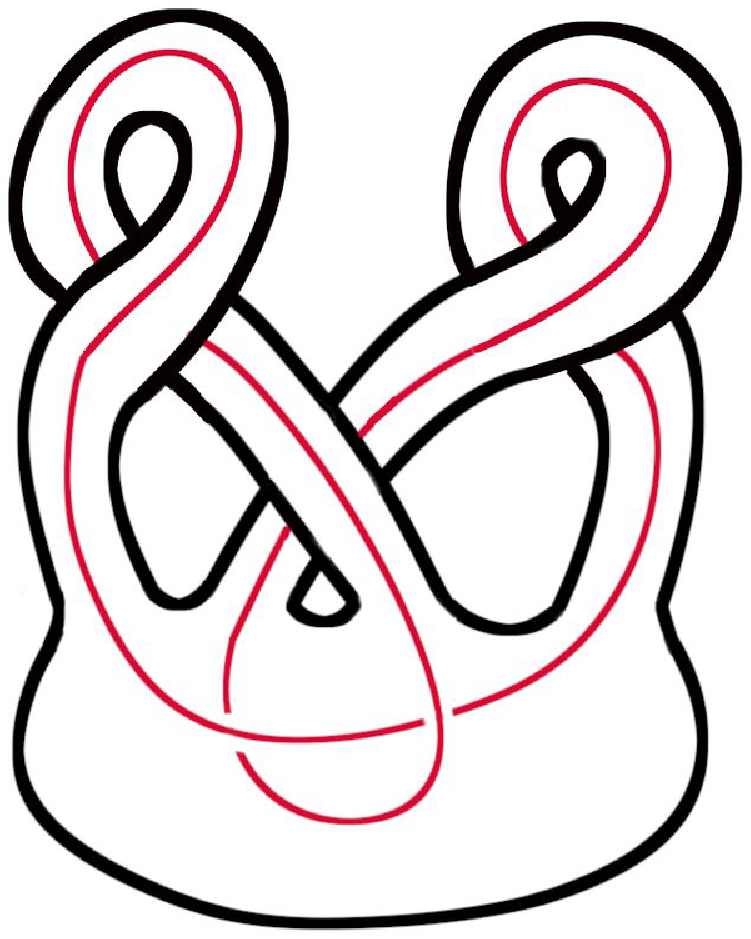}} \end{array} 
\ar[r]^{\kappa} & \begin{array}{c} \scalebox{.35}{\epsfig{figure=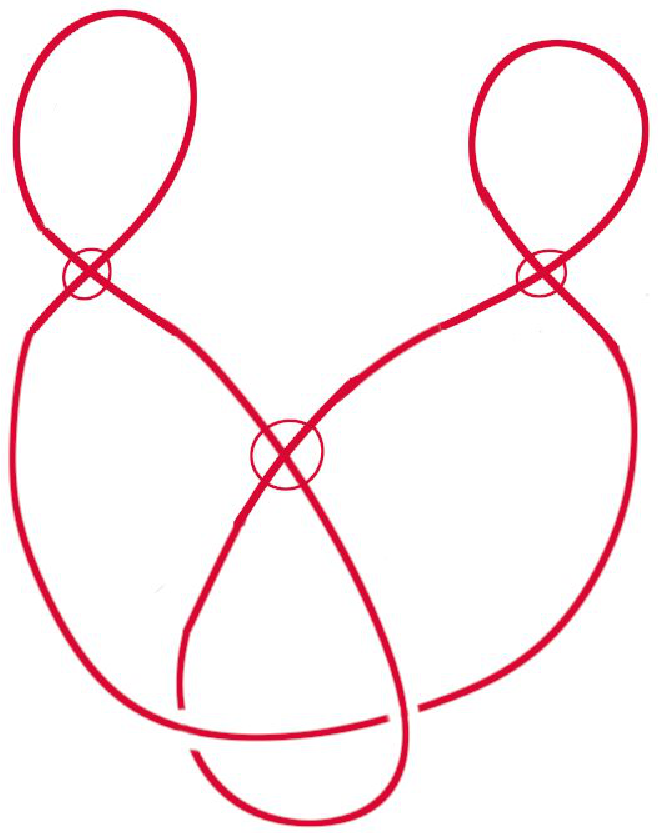}} \end{array}
}
\]
\caption{(Left) A knot diagram $[K;J,\Sigma]$ on $\Sigma$ of a special Seifert form $(K;J,\Sigma)$. The knot $J$ is a trefoil and $K$ is an unknot. (Right) A virtual cover of $K$ relative to $(J,p,\Sigma)$.}\label{special_ok_fig}
\end{figure}

\subsection{Principles of Invariance for Virtual Coverings} \label{sec_prin_invar} We consider the question of how virtual covers behave under equivalence of classical knots and links in $S^3$. We first show that virtual covers can be used to determine that the two component link $J \sqcup K$ is not unlinked, where $K$ is a classical knot, $J$ is fibered, and $\text{lk}(J,K)=0$. 

\begin{theorem} \label{unlinked_implies_classical} Let $K$ be a knot and $J$ a fibered knot in the complement of $K$ such that $\text{lk}(J,K)=0$. Let $p:\overline{S^3\backslash V(J)} \to S^1$ be a given fibration and $\Sigma=p^{-1}(z_0)$ for some $z_0 \in S^1$. If $J$ and $K$ are unlinked in $S^3$, then every virtual cover $\hat{K}$ of $K$ relative to $(J,p,\Sigma)$ is classical.
\end{theorem}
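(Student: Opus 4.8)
The plan is to use the hypothesis that $J \sqcup K$ is a split link to trap $K$ inside a $3$-ball disjoint from $V(J)$, lift that ball to the infinite cyclic cover $M_J$, deduce that the lift $K'$ of $K$ is confined to a single $3$-ball in $M_J$, and then argue that a knot lying in a $3$-ball of a thickened surface has a genus-zero representative in its stability class, so that $\hat{K}$ is classical.

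First I would set up the covering-space picture. Since $J$ and $K$ are unlinked, there is an embedded $2$-sphere $S \subset S^3$ with $\text{im}(J)$ in one complementary ball and $\text{im}(K)$ in the other; after pushing $S$ off of $\partial V(J)$ we may assume the ball $B$ containing $\text{im}(K)$ satisfies $B \subset \text{int}\,\overline{S^3 \backslash V(J)}$ and $B \cap V(J) = \emptyset$. Because $B$ is simply connected, $\pi_J^{-1}(B)$ is a disjoint union $\bigsqcup_\alpha \tilde{B}_\alpha$ of $3$-balls, each mapped homeomorphically onto $B$ by $\pi_J$; this is the same even-covering observation used in the proof of Lemma \ref{special_ok}. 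Now let $\hat{K}$ be any virtual cover, so by Definition \ref{defn_virt_cov} it is the stability class of some lift $K' : S^1 \to M_J$ with $\pi_J \circ K' = K$. Since $\text{im}(K')$ is connected and projects into $B$, it lies entirely in one of the balls $\tilde{B}_\alpha$, which I will call $\tilde{B}$.

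Next I would reduce the genus of the support. Identify $M_J$ with $\Sigma \times (0,1) \subset \Sigma \times I$. Then $\tilde{B}$ is an embedded $3$-ball in the interior of $\Sigma \times I$, so by the uniqueness of smoothly embedded $3$-balls up to ambient isotopy (the disk theorem, together with isotopy extension) there is an ambient isotopy of $\Sigma \times I$, supported in the interior and hence the identity on $\partial(\Sigma \times I)$, carrying $\tilde{B}$ onto a standard vertical ball $D^2 \times [\tfrac{1}{3},\tfrac{2}{3}]$ with $D^2 \subset \text{int}\,\Sigma$ a coordinate disk. This isotopy is an admissible equivalence of knots in $\Sigma \times I$ in the sense of Section \ref{sec_review}, and it carries $K'$ to a knot $K''$ with $\text{im}(K'') \subset D^2 \times I$. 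Since $\Sigma$ is the fiber surface of the fibration of $\overline{S^3\backslash V(J)}$ over $S^1$, it is a compact orientable surface with exactly one boundary component, so we may choose $g = \text{genus}(\Sigma)$ pairwise disjoint simple closed curves in $\Sigma \setminus D^2$ the union of which, serving as the submanifold $\sigma$ in a single stabilization, cuts $\Sigma$ down to the disk $D^2$. Each such curve misses $\text{im}(K'')$, so this stabilization is legitimate, and it shows that $\hat{K}$ is the stability class of $K''$ regarded as a knot in the thickened disk $D^2 \times I$. A knot in a thickened disk embeds in $S^2 \times I$, hence its stability class is classical; therefore $\hat{K}$ is classical.

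The main obstacle, and essentially the only step requiring genuine care, is the passage from ``$K'$ lies in a $3$-ball of $\Sigma \times I$'' to ``$K'$ has a genus-zero representative,'' i.e. justifying that an embedded $3$-ball in the interior of $\Sigma \times I$ can be isotoped, rel boundary, into standard vertical position. I expect to handle this with the disk theorem, noting that $\Sigma \times I$ is irreducible whenever $\Sigma$ has nonempty boundary, so that $\pi_2(\Sigma \times I) = 0$, which covers every fiber surface; in the degenerate case $\Sigma = S^2$ the conclusion is immediate since $S^2 \times I$ already supports only classical knots. The remaining ingredients --- the separating sphere for the split link, the lift of $B$ under $\pi_J$, and the bookkeeping for the stabilization --- are routine.
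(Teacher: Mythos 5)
Your argument is correct and follows essentially the same route as the paper's proof: confine $K$ to a $3$-cell disjoint from $J$, use the fact that this cell is evenly covered by $\pi_J$ to conclude that the lift $K'$ lies in a single $3$-cell of $M_J$, and then destabilize $\Sigma \times I$ down to a thickened disk so that $\hat{K}$ is classical. The only difference is that you spell out, via the disk theorem and an explicit choice of curves to cut along, the step the paper states tersely (``there must be a sequence of stabilizations of $\Sigma \times I$ to $D^2 \times I$''), which is added detail rather than a different approach.
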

\begin{proof} Since $J$ and $K$ are unlinked, there is a $3$-cell $V$ in $S^3$ such that $\text{im}(K) \subset V$ and $\text{im}(J) \cap V=\emptyset$. By applying a contraction in $V$, we may assume that there is a 3-cell $V' \subset V$ such that $\text{im}(K) \subset V'$ and there is a neighborhood $U$ of $V'$ which is evenly covered by $\pi_J$.
\newline
\newline
If $\pi_J \circ K'=K$, it follows that $K'$ is contained in a $3$-cell in $M_J$.  Thus, there must be a sequence of stabilizations of $\Sigma \times I \supset M_J$ to $D^2 \times I$. Thus, $K'$ stabilizes to a classical knot.
\end{proof}
\hspace{1cm}
\newline
\noindent We next consider the question of ambient isotopies of knots $K$. If $K_1 \leftrightharpoons K_2$ as classical knots in $S^3$, what is the relationship between a virtual cover of $K_1$ and a virtual cover of $K_2$, relative to some fibered triple $(J,p,\Sigma)$? The following lemma gives a sufficient condition under which an ambient isotopy in the complement of the fibered component $J$ lifts to an ambient isotopy in the infinite cyclic cover of the complement of $J$. After the lemma is proved, we apply it to the case of virtual covers.

\begin{lemma} \label{isotopies_lift} Let $K:S^1 \to S^3$ be a classical knot and $(J,p,\Sigma)$ a fibered triple such that $K$ is in $\overline{S^3\backslash V(J)}$ and $\text{lk}(J,K)=0$. Let $H:S^3 \times I \to S^3$ be a smooth ambient isotopy between $K$ and $H_1(K)$ such that $H_t|_{V(J)}=\text{id}_{V(J)}$ for all $t \in I$. Let $K':S^1 \to M_J$ be a knot in $M_J$ satisfying $\pi_J \circ K'=K$. Then there is a smooth ambient isotopy $H':M_J \times I \to M_J$ such that $\pi_J \circ H_1'(K')=H_1(K)$, and $H_t'|_{\partial M_J}=\text{id}|_{\partial M_J}$ for all $t \in I$.  
\end{lemma}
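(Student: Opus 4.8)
The plan is to restrict the ambient isotopy $H$ to the knot complement $\overline{S^3\backslash V(J)}$, lift the restricted isotopy to the infinite cyclic cover $M_J$ using the homotopy lifting property of the covering map $\pi_J$, and then verify that the resulting lift is an honest smooth ambient isotopy of $M_J$ which is stationary on $\partial M_J$. First I would observe that, since each $H_t$ is a diffeomorphism of $S^3$ equal to the identity on $V(J)$, it carries $\text{int}(V(J))$ onto itself, hence carries $\overline{S^3\backslash V(J)}=S^3\backslash\text{int}(V(J))$ onto itself and is the identity on $\partial V(J)$. Thus $\bar{H}$, the restriction of $H$ to $\overline{S^3\backslash V(J)}\times I$, is a smooth ambient isotopy of $\overline{S^3\backslash V(J)}$ with $\bar{H}_0=\text{id}$ and $\bar{H}_t|_{\partial V(J)}=\text{id}$ for all $t\in I$.

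Next I would form the map $G:M_J\times I\to\overline{S^3\backslash V(J)}$, $G(x,t)=\bar{H}_t(\pi_J(x))$, noting $G(\cdot,0)=\pi_J$. Since $\pi_J$ is a covering map, the homotopy lifting property (as in \cite{rolfsen}) produces a unique continuous $H':M_J\times I\to M_J$ with $H'(\cdot,0)=\text{id}_{M_J}$ and $\pi_J\circ H'=G$; because $\pi_J$ is a local diffeomorphism and $G$ is smooth, $H'$ is smooth. To see that each $H'_t:=H'(\cdot,t)$ is a diffeomorphism, lift the inverse isotopy $t\mapsto\bar{H}_t^{-1}$ in the same way to obtain a smooth $J':M_J\times I\to M_J$ with $J'_0=\text{id}_{M_J}$; then $H'_t\circ J'_t$ and $J'_t\circ H'_t$ are both lifts of $\pi_J$ agreeing with $\text{id}_{M_J}$ at $t=0$, so uniqueness of lifts forces $H'_t\circ J'_t=J'_t\circ H'_t=\text{id}_{M_J}$ for every $t$, whence $H'_t$ is a diffeomorphism with inverse $J'_t$.

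For the boundary condition, I would argue pointwise: if $x\in\partial M_J$ then $\pi_J(x)\in\partial V(J)$, so $t\mapsto G(x,t)=\bar{H}_t(\pi_J(x))=\pi_J(x)$ is the constant path, and therefore its lift $t\mapsto H'(x,t)$ starting at $x$ is constant by uniqueness of path lifting; that is, $H'_t|_{\partial M_J}=\text{id}|_{\partial M_J}$ for all $t$. Finally, using $\pi_J\circ K'=K$ we get $\pi_J\circ H'_1(K')=\bar{H}_1(\pi_J\circ K')=\bar{H}_1(K)=H_1(K)$, which is the desired conclusion.

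The only point requiring genuine care is upgrading the topological lift furnished by covering space theory to a \emph{smooth} ambient isotopy of $M_J$: smoothness of $H'$ comes from $\pi_J$ being a local diffeomorphism together with smoothness of $G$, and the invertibility of each $H'_t$ comes from the inverse-isotopy argument above rather than from the lifting theorem itself. Everything else is routine: the reduction to an isotopy of $\overline{S^3\backslash V(J)}$ fixing $\partial V(J)$ pointwise uses only that $H$ is stationary on $V(J)$, and the stationarity of $H'$ on $\partial M_J$ uses only that $\pi_J$ restricts to a covering $\partial M_J=\pi_J^{-1}(\partial V(J))\to\partial V(J)$, both of which are immediate from the definitions.
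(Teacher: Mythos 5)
Your argument is correct, but it follows a genuinely different route from the paper. The paper does not lift the ambient isotopy at all: it lifts only the trace $F(z,t)=H(K(z),t)$ of the knot, a homotopy $S^1\times I\to\overline{S^3\backslash V(J)}$, observes that each level $H_t(K)$ is embedded so the lifted homotopy $F'$ is an isotopy of knots in $M_J$, and then invokes the isotopy extension theorem (Hirsch, Ch.\ 8, Thm.\ 1.3) after including $M_J\hookrightarrow\Sigma\times I$ to produce an ambient isotopy of the \emph{compact} manifold $\Sigma\times I$ fixing $\partial(\Sigma\times I)$. You instead lift the entire restricted ambient isotopy $\bar{H}$ through the covering $\pi_J$, using the homotopy lifting property with initial lift $\mathrm{id}_{M_J}$, and obtain the diffeomorphism property of each level by lifting the inverse isotopy and applying uniqueness of lifts, and the boundary condition by unique path lifting over the stationary paths on $\partial V(J)$; the equivariance $\pi_J\circ H_t'=\bar{H}_t\circ\pi_J$ then gives the $t=1$ condition. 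Your approach is more self-contained (covering space theory only, no isotopy extension) and proves something slightly stronger, namely an isotopy of $M_J$ covering $\bar{H}$ at every time $t$ and fixing all of $\partial M_J$, which is exactly the conclusion as stated. What the paper's route buys is an ambient isotopy of $\Sigma\times I$ with the boundary of the \emph{closed} thickened surface fixed, which is the form used directly in Theorem \ref{main_equiv_thm}; your $H'$ lives on $M_J\cong\Sigma\times(0,1)$ and does not automatically extend by the identity over $\Sigma\times\{0,1\}$, so to feed it into the stable-equivalence argument one would still restrict it to the (compactly supported) knot isotopy and apply isotopy extension there — i.e., essentially the paper's final step. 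Two small cosmetic points: your notation $J'$ for the lifted inverse isotopy clashes with the knot $J$, and you should say explicitly that smoothness of the inverse family $t\mapsto\bar{H}_t^{-1}$ follows from the track $(x,t)\mapsto(\bar{H}_t(x),t)$ being a diffeomorphism; neither affects correctness.
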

\begin{proof} Let $t_0 \in S^1$ and $y_0=K(t_0)$. Then we may write $K$ as a map of pointed spaces $K:(S^1,t_0) \to (S^3,y_0)$. Since $\text{lk}(J,K)=0$, any lift of $K$ to $M_J$ must be a simple closed curve. Since the lift of $K$ to $M_J$ can be taken to be a smooth embedding, it follows that any lift of $K$ is a knot in $M_J$. There is a $x_0 \in \pi_J^{-1}(y_0)$ such that the lift $K_{x_0}:(S^1,t_0) \to (M_J,x_0)$ of $K$ is the given lift $K'$.
\newline
\newline
Let $F:S^1 \times I \to \overline{S^3\backslash V(J)}$ be the homotopy defined by $F(z,t)=H(K(z),t)$. Then $F$ lifts to a homotopy $F':S^1 \times I \to M_J$ satisfying $F'(z,0)=K'(z)$. Now, for all $t \in I$, $H_t(K)$ is a knot in $\overline{S^3\backslash V(J)}$. Thus the lifts of $H_t(K)$ must also be smoothly embedded. It follows that $F_t':S^1 \to M_J$ must also be a knot in $M_J$. Thus, $F'$ is an isotopy.
\newline
\newline
By inclusion of $M_J$ into $\Sigma \times I$, we may consider $F'$ as an isotopy $S^1 \times I \to \Sigma \times I$ whose image does not intersect $\partial (\Sigma \times I)$. Hence, by the isotopy extension theorem (see \cite{hirsch}, Chapter 8, Theorem 1.3) there is a smooth ambient isotopy $H': (\Sigma \times I ) \times I \to \Sigma \times I$ taking $K'$ to $F_1'(K')=H_1'(K')$ and satisfying $H_t'|_{\partial (\Sigma \times I)}=\text{id}|_{\partial (\Sigma \times I)}$. Then $\pi_J \circ H_1'(K')=\pi_J\circ F_1'(K')=F_1(K)=H_1(K)$.  
\end{proof}

\begin{remark} \label{loc_unknot} A weaker version of Lemma \ref{isotopies_lift} can be proved in the piecewise linear category. In this case we must add the hypotheses that $\Sigma \not \approx D^2$ and $K_1,K_2$ are \emph{locally unknotted} in $\overline{S^3 \backslash V(J)}$. For a useful definition of locally unknotted, see \cite{nanyes}. With these additional hypotheses, the lemma follows from \cite{feustel}, Theorem 3.4 and the proof Theorem 3.3.
\end{remark}

\begin{theorem} \label{main_equiv_thm} For $i=1,2$, let $K_i:S^1 \to S^3$ be a classical knot and $(J,p,\Sigma)$ a fibered triple such that $K_i$ is in $\overline{S^3\backslash V(J)}$. For $i=1,2$, suppose that $(K_i;J,\Sigma)$ is in special Seifert form (so that $\text{lk}(J,K_i)=0$). Let $\hat{K}_1$, $\hat{K}_2$ be virtual covers of $K_1$, $K_2$, respectively, relative to $(J,p,\Sigma)$. If there is a smooth ambient isotopy $H:S^3 \times I \to S^3$ taking $K_1$ to $K_2$ such that $H_t|_{V(J)}=\text{id}_{V(J)}$ for all $t \in I$, then $\hat{K}_1 \leftrightharpoons \hat{K}_2$ as virtual knots.
\end{theorem}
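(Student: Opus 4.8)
The plan is to obtain Theorem \ref{main_equiv_thm} as a short corollary of the two preceding lemmas, with no genuinely new technical input. First I would record that, since $(K_i;J,\Sigma)$ is in special Seifert form for $i=1,2$, Remark \ref{remark_lk_0_1} gives $\text{lk}(J,K_i)=0$, so the hypotheses of both Lemma \ref{special_ok} and Lemma \ref{isotopies_lift} are available. By Lemma \ref{special_ok}, $\hat{K}_1 \leftrightharpoons \kappa([K_1;J,\Sigma])$ and $\hat{K}_2 \leftrightharpoons \kappa([K_2;J,\Sigma])$, so it suffices to show $\kappa([K_1;J,\Sigma]) \leftrightharpoons \kappa([K_2;J,\Sigma])$.

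Next I would fix a lift $K_1':S^1 \to M_J$ with $\pi_J \circ K_1' = K_1$; this exists and is a knot in $M_J$ because $\text{lk}(J,K_1)=0$ (as in the proofs of Lemmas \ref{special_ok} and \ref{isotopies_lift}), and its stability class in $\Sigma \times I$ is a virtual cover of $K_1$, hence $\leftrightharpoons \kappa([K_1;J,\Sigma])$ by Lemma \ref{special_ok}. Applying Lemma \ref{isotopies_lift} to the given ambient isotopy $H$ (which satisfies $H_t|_{V(J)}=\text{id}_{V(J)}$), I get a smooth ambient isotopy $H':(\Sigma \times I)\times I \to \Sigma \times I$ with $H_t'|_{\partial(\Sigma\times I)}=\text{id}$ for all $t$ and $\pi_J\circ H_1'(K_1')=H_1(K_1)=K_2$. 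Set $K_2':=H_1'(K_1')$. Then $\pi_J \circ K_2' = K_2$, so $K_2'$ is a legitimate lift of $K_2$ and its stability class in $\Sigma \times I$ is a virtual cover of $K_2$, hence $\leftrightharpoons \kappa([K_2;J,\Sigma])$ again by Lemma \ref{special_ok}. But $H'$ is an ambient isotopy of $\Sigma \times I$ fixing $\partial(\Sigma\times I)$ pointwise and carrying $K_1'$ to $K_2'$, so $K_1'$ and $K_2'$ are equivalent as knots in the thickened surface $\Sigma \times I$; in particular they are stably equivalent and represent the same element of $\mathscr{TS}$, hence the same virtual knot. Chaining these equivalences yields $\kappa([K_1;J,\Sigma]) \leftrightharpoons \kappa([K_2;J,\Sigma])$, and therefore $\hat{K}_1 \leftrightharpoons \hat{K}_2$.

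I do not expect a serious obstacle: the one delicate point — promoting an ambient isotopy of the open manifold $M_J \approx \Sigma \times (0,1)$ to an ambient isotopy of $\Sigma \times I$ that is the identity on $\partial(\Sigma\times I)$ — is already absorbed into the statement and proof of Lemma \ref{isotopies_lift} via the isotopy extension theorem, so nothing further is needed here. The only thing to write carefully is the explicit observation that $K_2'=H_1'(K_1')$ covers $K_2$, which is what licenses the second application of Lemma \ref{special_ok}; the rest is bookkeeping of $\leftrightharpoons$-equivalences, and (as noted in Remark \ref{loc_unknot}) the argument also goes through in the piecewise linear category under the extra hypotheses recorded there.
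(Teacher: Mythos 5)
Your proposal is correct and follows essentially the same route as the paper: apply Lemma \ref{isotopies_lift} to the given isotopy $H$ to obtain an ambient isotopy of $\Sigma\times I$ (fixing $\partial(\Sigma\times I)$) carrying a lift $K_1'$ of $K_1$ to a lift $K_2'=H_1'(K_1')$ of $K_2$, conclude the two lifts are stably equivalent, and then use Lemma \ref{special_ok} to identify their stability classes with $\hat{K}_1$ and $\hat{K}_2$. The only difference is that you spell out explicitly that $K_2'$ covers $K_2$, a point the paper leaves implicit.
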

\begin{proof} By Lemma \ref{isotopies_lift}, there is a smooth ambient isotopy between a lift $K_1'$ of $K_1$ to $M_J$ and a lift $K_2'$ of $K_2$ to $M_J$. Hence, if $K_1'$ and $K_2'$ are considered as knots in $\Sigma \times I$ (via inclusion), then they must stabilize to virtual knots $V_1$ and $V_2$, respectively, which are equivalent virtual knots. Since we have for $i=1,2$ that $(K_i;J,\Sigma)$ is a special Seifert form, it follows from Lemma \ref{special_ok} that $V_i \leftrightharpoons \kappa([K_i;J,\Sigma]) \leftrightharpoons \hat{K}_i$. Thus, $\hat{K}_1 \leftrightharpoons \hat{K}_2$ as virtual knots.
\end{proof}

\section{Applications and Examples} \label{sec_apps}

\noindent Classical knot theory is a part of virtual knot theory: it follows from Kuperberg's theorem that if two classical knots are equivalent as virtual knots then they are ambient isotopic. On the other hand, virtual knots considered as knots in thickened surfaces modulo stabilization/destabilization, have a rich topology of the ambient space (indeed, it is $\Sigma \times I$, where $\Sigma$ is a compact orientable surface). This non-trivial topology allows one to extend many invariants of virtual knots by introducing some extra topological/combinatorial data (see \cite{denisvass_book}). 
\newline
\newline
One of the main approaches of such sort uses the parity theory introduced by the second named author \cite{Ma2,on_free_knots}. Such invariants rely on homology and homotopy information, and in some cases they allow us to reduce questions about knots to questions about their representatives (see Theorem \ref{free_reproduced}): if a knot diagram is ``odd enough'' or ``complicated enough'' then it reproduces itself in any equivalent diagram. The invariants constructed in this way (the parity bracket etc.) contain some graphical information about the knot which appears in every representative of the knot. 
\newline
\newline
These methods cannot be applied directly to classical knot theory because of the trivial topology of the ambient space $\mathbb{R}^{3}$ and the absence of parity for knots. Nevertheless, the theory of classical knots (and, in fact, links) can be put into the framework of virtual knot theory by using methods described in the previous two sections. This allows one to apply the invariants and constructions already discovered for virtual knots to the case of classical knots. This is the aim of the present section. We begin with a brief review of parity.

\subsection{Brief Review of Parity} \label{sec_parity_review} The canonical example of a parity is the \emph{Gaussian parity}. For a Gauss diagram $D_K$ of a virtual knot $K$, two arrows are said to \emph{intersect} (or be \emph{linked}) if the chords of $S^1$ between the endpoints of the arrows intersect as lines in $D^2$. 
\newline
\newline
A classical crossing of $K$ is said to be \emph{odd} if its corresponding arrow in $D_K$ intersects an odd number of arrows in $D_K$. A classical crossing that is not odd is said to be \emph{even}. Observe that (1) the crossing in a Reidemeister one move is even, (2) the two crossings involved in a Reidemeister 2 move both have the same parity, and (3) an even number of the crossings in a Reidemeister 3 move are odd.
\newline
\newline
A \emph{parity} is, roughly, any function on crossings of $K$ that satisfies properties (1)-(3) above.  Parities have been used to extend many invariants of virtual knots \cite{Af,IMN,CM,mant_par_cobord}. For example, the \emph{parity bracket} \cite{IMN,kauff_new} uses parity to extend the Kauffman bracket. A general theory of parity based on an axiomatic approach for virtual knots, flat knots, free knots, and curves on surfaces has been developed \cite{IMN,denisvass_book}.
\newline
\newline
The most elementary use of parity to create a virtual knot invariant is the \emph{odd writhe} \cite{KaV}. Let $P(K)$ denote the number of crossings of $K$ which are odd in the Gaussian parity and signed $\oplus$. Let $N(K)$ denote the number of crossings of $K$ which are odd in the Gaussian parity and signed $\ominus$. The odd writhe is defined to be:
\[
\theta(K)=P(K)-N(K).
\]
Since every classical knot has a diagram in which all of the crossings are even, it follows that $\theta(K)=0$ for all classical knots $K$. For virtual knots, however, the invariant is useful. If $\theta(K)\ne 0$, then one can immediately conclude that $K$ is non-classical.

\subsection{Isotopies of Knots Fixing a Knot in the Complement}\label{sec_odd_writhe} In light of Theorem \ref{main_equiv_thm}, we see that virtual covers can be used to find examples of equivalent knots in $S^3$ for which every ambient isotopy taking one to the other ``moves'' a knot in the complement. This is a result about classical knot theory that is established using techniques which appear only in the theory of virtual knots. The following proposition gives a typical application of our technique.

\begin{figure}[h]
\[
\xymatrix{
\begin{array}{c} \scalebox{.4}{\psfig{figure=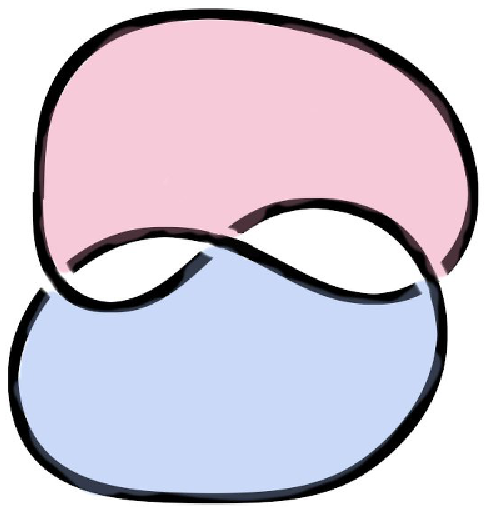}} \end{array} \ar[r]^{H} & \begin{array}{c} \scalebox{.4}{\psfig{figure=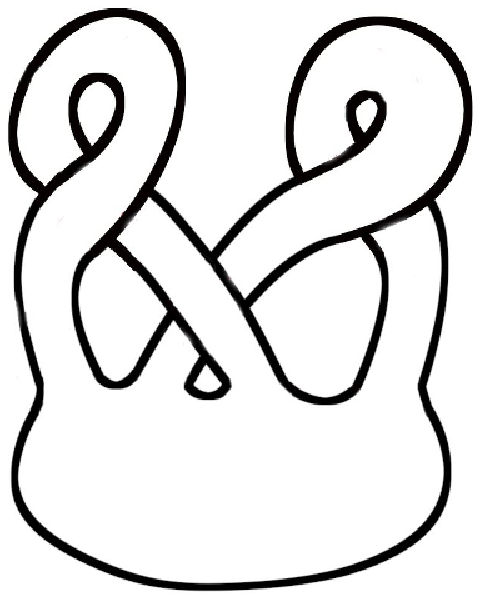}} \end{array}
}
\]
\caption{There is an ambient isotopy of $S^3$ taking the Seifert surface of the trefoil on the left to the Seifert surface of the trefoil on the right.} \label{band_trefoil}
\end{figure}

\begin{proposition}\label{prop_fig_8} There exist figure eight knots $K_1$ and $K_2$ in $S^3$ and a trefoil $T$ in the mutual complement $\overline{S^3 \backslash (V(K_1) \cup V(K_2))}$ such that there is no ambient isotopy $H:S^3 \times I \to S^3$ taking $K_1$ to $K_2$ having the property that $H_t|_{V(T)}=\text{id}_{V(T)}$ for all $t \in I$. 
\end{proposition}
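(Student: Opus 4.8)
The plan is to deduce the proposition as the contrapositive of Theorem \ref{main_equiv_thm}, using a parity invariant of virtual covers to obstruct a relative isotopy. Take $T$ to be a trefoil, which is fibered with fiber a once-punctured torus; fix a fibration $p:\overline{S^3\backslash V(T)}\to S^1$ and put $\Sigma=p^{-1}(z_0)$, so $(T,p,\Sigma)$ is a fibered triple. I will construct two figure eight knots $K_1,K_2$, each in special Seifert form $(K_i;T,\Sigma)$ relative to this \emph{single} triple, so that their virtual covers $\hat K_1$ and $\hat K_2$ are inequivalent virtual knots. Since any two figure eight knots are ambient isotopic in $S^3$, there is automatically a (non-relative) ambient isotopy $K_1\leftrightharpoons K_2$, and $T$ lies in the mutual complement because $K_i\cap V(T)=\emptyset$ for a special Seifert form. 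If in addition there were an ambient isotopy $H$ with $H_t|_{V(T)}=\mathrm{id}_{V(T)}$ for all $t$, then Theorem \ref{main_equiv_thm} would force $\hat K_1\leftrightharpoons\hat K_2$, a contradiction; this is the proposition.

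For $K_1$, I would start from the usual band presentation of the figure eight knot as the boundary of a once-punctured torus built from a disk with two clasped, full-twisted bands, and then isotope this picture into a neighborhood of the fiber surface $\Sigma$ of $T$, routing one of the two bands around the handle of $\Sigma$ by introducing finitely many $3$-cells $B_1,\dots,B_n$ as in the definition of special Seifert form (each time the band would leave $\Sigma$ it does a ``band pass'' through some $B_i$). The output is a special Seifert form $(K_1;T,\Sigma)$ with $\mathrm{lk}(T,K_1)=0$ (Remark \ref{remark_lk_0_1}) together with its diagram $[K_1;T,\Sigma]$ on the once-punctured torus. By Lemma \ref{special_ok} the virtual cover is $\hat K_1\leftrightharpoons\kappa([K_1;T,\Sigma])$, read off directly as a Gauss diagram from this surface diagram. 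Two things must then be verified on the explicit picture: (i) the underlying classical knot $K_1\subset S^3$ really is the figure eight knot — checked by a sequence of Reidemeister moves / Seifert's algorithm on the $S^3$ diagram obtained by forgetting $\Sigma$; and (ii) in the Gauss diagram of $\kappa([K_1;T,\Sigma])$ some crossing is odd in the Gaussian parity and the odd writhe $\theta(\hat K_1)$ is nonzero (the handle-wrapping band pass is what produces the odd crossing). If it should happen that $\theta$ vanishes on the chosen diagram, the parity bracket of \cite{IMN} can be substituted to detect non-classicality.

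For $K_2$, I would take a standard figure eight diagram supported in a small ball around a disk $D\subset\mathrm{int}(\Sigma)$, using a single trivial $3$-cell $B_1$ in that ball so that $K_2\cap\Sigma$ is a finite union of intervals; this is a special Seifert form $(K_2;T,\Sigma)$ whose diagram $[K_2;T,\Sigma]$ is an ordinary planar figure eight diagram, so by Lemma \ref{special_ok} the virtual cover $\hat K_2\leftrightharpoons\kappa([K_2;T,\Sigma])$ is the classical figure eight knot and $\theta(\hat K_2)=0$ (the same conclusion also follows from Theorem \ref{unlinked_implies_classical}, since $T\sqcup K_2$ is unlinked). Combining this with $\theta(\hat K_1)\neq 0$ gives $\hat K_1\not\leftrightharpoons\hat K_2$, which completes the argument outlined above.

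The main obstacle is items (i)–(ii) in the construction of $K_1$: one needs a band-pass configuration on $\Sigma$ that simultaneously closes up to the figure eight knot in $S^3$ and yet is a ``genuinely virtual'' surface diagram. These requirements pull in opposite directions, since wrapping bands around the handle of $\Sigma$ to create odd crossings in the cover tends to change the classical knot type; the band passes must therefore be arranged to cancel when $\Sigma$ is capped off back into $S^3$ while remaining essential in the infinite cyclic cover $M_T$. Producing and checking such an explicit diagram, and carrying out the parity count, is the technical heart of the proof; everything else is a formal consequence of Lemma \ref{special_ok}, Theorem \ref{unlinked_implies_classical}, and Theorem \ref{main_equiv_thm}.
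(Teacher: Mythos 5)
Your overall strategy coincides with the paper's: put both knots in special Seifert form relative to a single fibered triple $(T,p,\Sigma)$ for the trefoil, read off the virtual covers via Lemma \ref{special_ok}, distinguish them by the odd writhe, and conclude by the contrapositive of Theorem \ref{main_equiv_thm}. Your variant of placing $K_2$ in a ball meeting $\Sigma$ in a disk, so that $\hat K_2$ is classical and $\theta(\hat K_2)=0$ (consistently with Theorem \ref{unlinked_implies_classical}), is a legitimate simplification, though a planar figure eight diagram requires one $3$-cell per crossing rather than ``a single trivial $3$-cell $B_1$''. The paper instead takes two figure eight knots both wrapping the fiber, with $\theta(\hat K_1)=2$ and $\theta(\hat K_2)=-2$, which has the side benefit of showing that both links $T\sqcup K_i$ are nontrivially linked.

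The genuine gap is that the proposition is an existence statement, and its entire content is the explicit exhibition of a figure eight knot $K_1$ in special Seifert form $(K_1;T,\Sigma)$ whose virtual cover has nonzero odd writhe. You describe the intended construction (route a band of the figure eight around the handle of the fiber) and then list as unverified items (i) that the resulting knot in $S^3$ really is the figure eight and (ii) that the Gauss diagram of $\kappa([K_1;T,\Sigma])$ has odd crossings with $\theta\ne 0$ --- and you yourself observe that these requirements pull in opposite directions. Nothing in your argument rules out the a priori possibility that every figure eight knot in special Seifert form on the trefoil fiber (reachable by your construction) yields a cover with vanishing odd writhe; the fallback to the parity bracket does not repair this, since no concrete diagram is produced on which to evaluate any invariant. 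The paper closes exactly this gap by exhibiting explicit diagrams $[K_1;T,\Sigma]$ and $[K_2;T,\Sigma]$ on the fiber (Figures \ref{band_trefoil}, \ref{odd_writhe_odds}, \ref{odd_writhe_evens}) and carrying out the parity count, obtaining $\theta(\hat K_1)=2$ and $\theta(\hat K_2)=-2$. So your proposal is a correct blueprint that matches the paper's method, but it is incomplete at its decisive step: the construction and verification of the key example.
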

\begin{proof} The trefoil knot is fibered.  An explicit fibration is given in \cite{rolfsen}. Let $(T_0,p_0,\Sigma_0)$ denote a fibered triple for this fibration. Let $H:S^3 \times I \to S^3$ denote an ambient isotopy of $S^3$ taking $\Sigma_0$ to the Seifert surface $\Sigma$ depicted in Figure \ref{band_trefoil}. Let $T=H_1(T_0)$ and $p= p_0 \circ H_1^{-1}$. Then $(T,p,\Sigma)$ is a fibered triple.
\newline
\newline
Let $K_1$ be the figure eight knot given by the thin red curve depicted on the far left in Figure \ref{odd_writhe_odds}. Let $K_2$ be the figure eight knot given by the thin red curve depicted on the far left in Figure \ref{odd_writhe_evens}. Knot diagrams $[K_1;T,\Sigma]$ and $[K_2;T,\Sigma]$ on $\Sigma$ are centered in Figures \ref{odd_writhe_odds}, and \ref{odd_writhe_evens} respectively. By Theorem \ref{special_ok}, the virtual covers $\hat{K}_1$ and $\hat{K}_2$ of $K_1$, and $K_2$, respectively, relative to $(T,p,\Sigma)$, are found to be as depicted the far right in Figures \ref{odd_writhe_odds} and \ref{odd_writhe_evens}, respectively. 
\newline
\newline
Computing the odd writhe, we see that $\theta(\hat{K}_1)=2$ and $\theta(\hat{K}_2)=-2$. Thus, $\hat{K}_1 \not\leftrightharpoons \hat{K}_2$ as virtual knots. The result follows from Theorem \ref{main_equiv_thm}.
\end{proof}

\begin{remark} The proof of Propostion \ref{prop_fig_8} also shows that the links $T \sqcup K_1$ and $T \sqcup K_2$ are not unlinked. Indeed, $K_1$ and $K_2$ have virtual covers relative to $(T,p,\Sigma)$ which are non-classical. A virtual cover of each $K_i$ has a non-zero odd writhe.
\end{remark}

\begin{figure}[htb]
\[
\xymatrix{
\begin{array}{c} \scalebox{.5}{\psfig{figure=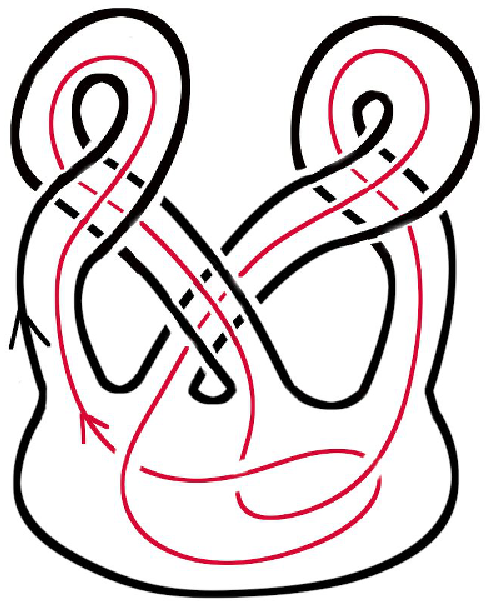}} \end{array} \ar[r] & 
\begin{array}{c}
\scalebox{.5}{\psfig{figure=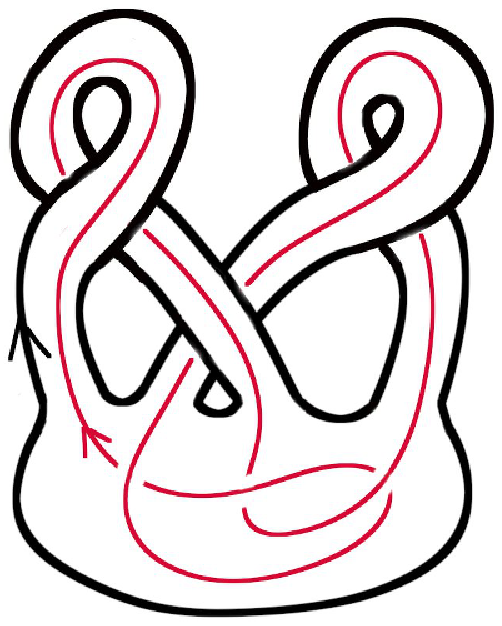}}
\end{array} \ar[r] &
\begin{array}{c} \scalebox{.5}{\psfig{figure=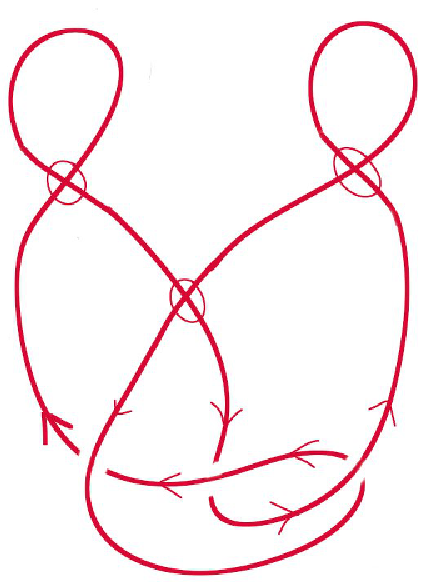}}
\end{array} 
}
\]
\caption{A trefoil $T$ (thick curve) linked with a figure eight knot $K_1$ (thin curve), a special Seifert form $[K_1;T,\Sigma]$, and a virtual cover of $K_1$ relative to $(T,p,\Sigma)$ (far right). The two $\oplus$ signed crossings of the given virtual cover are odd in the Gaussian parity and the $\ominus$ signed crossing is even.} \label{odd_writhe_odds}
\end{figure}

\begin{figure}[htb]
\[
\xymatrix{
\begin{array}{c} \scalebox{.5}{\psfig{figure=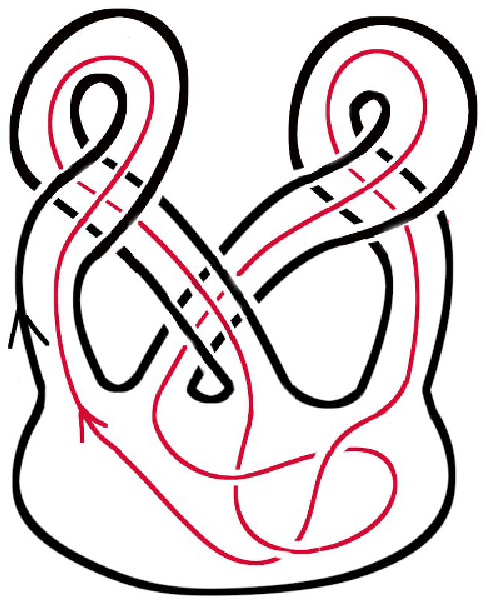}} \end{array} \ar[r] & 
\begin{array}{c}
\scalebox{.5}{\psfig{figure=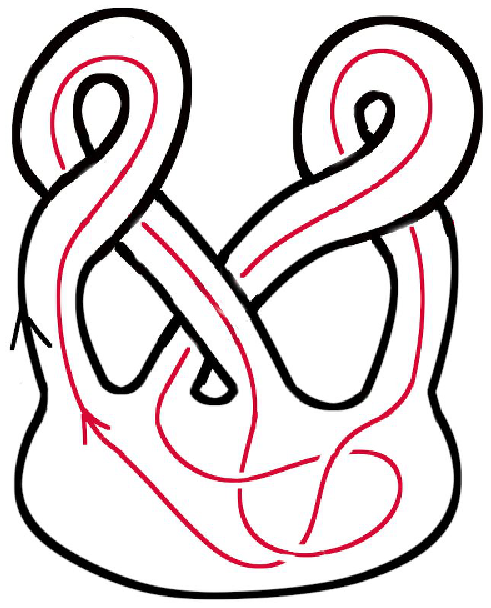}}
\end{array} \ar[r] &
\begin{array}{c} \scalebox{.5}{\psfig{figure=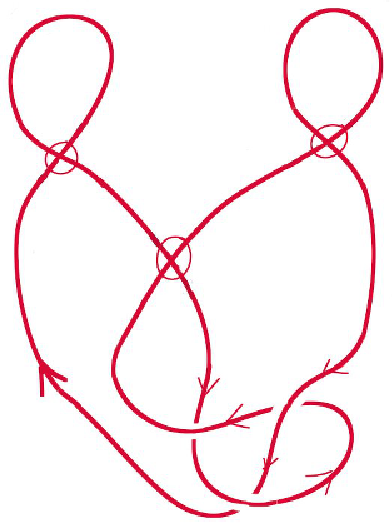}}
\end{array} 
}
\]
\caption{A trefoil $T$ (thick curve) linked with a figure eight knot $K_2$ (thin curve), a special Seifert form $[K_2;T,\Sigma]$, and a virtual cover of $K_2$ relative to $(T,p,\Sigma)$ (far right). The two $\ominus$ signed crossings of the given virtual cover are odd in the Gaussian parity and the $\oplus$ signed crossing is even.}
\label{odd_writhe_evens}
\end{figure}

\subsection{Reproduced Subdiagrams of Classical Knots} \label{sec_irred_odd} A feature of virtual knots is the existence of strong minimality theorems for their diagrams. Recall that a \emph{minimal diagram} of a classical knot is (typically) a diagram having the smallest possible number of classical crossings. A minimal diagram of a classical knot need not be unique: there may be many ``different'' diagrams of the knot which achieve the minimal crossing number. On the other hand, there are virtual knots having diagrams which are minimal in the number of classical crossings and which are reproduced in \emph{all} diagrams of the virtual knot.
\newline
\newline
The aim of the section is to use virtual coverings to demonstrate that there are minimal diagrams for classical knots which are also reproducible in the sense analogous to that of virtual knots. We begin with several definitions.

\begin{definition} [Crossing Change/Virtualization] Let $D$ be a Gauss diagram. A \emph{crossing change} at an arrow $x$ of $D$ is the Gauss diagram $D'$ obtained from $D$ by changing both the direction and the sign of $x$. In an oriented virtual knot diagram, a crossing change at $x$ changes a $\oplus$ classical crossing to an $\ominus$ classical crossing.  A \emph{virtualization} at an arrow $x$ of $D$ is the Gauss diagram $D'$ obtained by changing the direction of $x$ but not the sign of $x$.
\end{definition}

\begin{definition}[Free Knot Diagram] A \emph{free knot diagram} is an equivalence class of Gauss diagrams by crossing changes and virtualizations. A free knot diagram is often depicted as a Gauss diagram with arrowheads and signs erased (see Figure \ref{irred_odd_gauss_fig}, where all the signs on the left hand side are $\oplus$). If $K$ is a virtual knot, the projection of $K$ to a free knot diagram is denoted $[K]$.
\end{definition}

\begin{figure}[htb]
\[
\xymatrix{
\begin{array}{c}\scalebox{.5}{\epsfig{figure=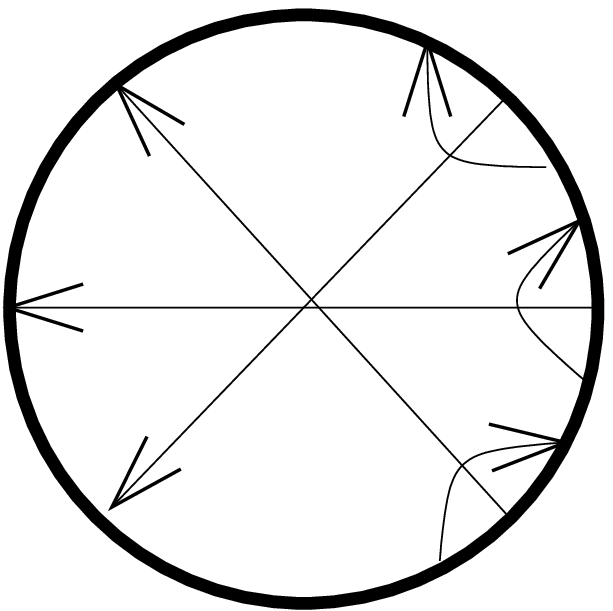}} \end{array} \ar[r] & \begin{array}{c}\scalebox{.5}{\epsfig{figure=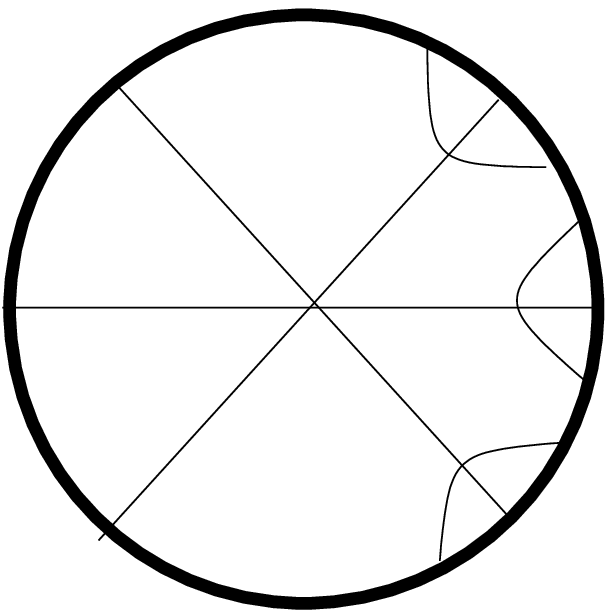}} \end{array} \\
}
\]
\caption{A Gauss diagram (right) and a chord diagram of the projection to an irreducibly odd free knot (left).} \label{irred_odd_gauss_fig}
\end{figure}

\begin{definition}[Free Reidemeister Move] A \emph{free Reidemeister move} is a move $F_l \leftrightharpoons F_r,$ where $R_l \leftrightharpoons R_r$ is an extended Reidemeister move from Figure \ref{exrmoves}, $R_l$ is in the free knot diagram $F_l$, and $R_r$ is in the free knot diagram $F_r$. Free knot diagrams $F_1$ and $F_2$ are said to be \emph{equivalent} if there is a finite sequence of Gauss diagram equivalencies and free Reidemeister moves taking $F_1$ to $F_2$.
\end{definition} 

\noindent A free knot diagram $K$ may be regarded as an immersed graph in $\mathbb{R}^2$.  The vertices of the graph correspond to the crossings of $K$. The edges correspond to arcs of $K$ between the crossings. The \emph{framing} of the free knot diagram is a choice of an Euler circuit of the graph such that consecutive half-edges in the circuit are opposite one another at the crossing where they intersect. Any abstract four valent with a framing is called a \emph{four valent framed graph with one unicursal component} \cite{Ma2}.

\begin{figure}[htb]
\[
\xymatrix{
\begin{array}{c}
\scalebox{.5}{\psfig{figure=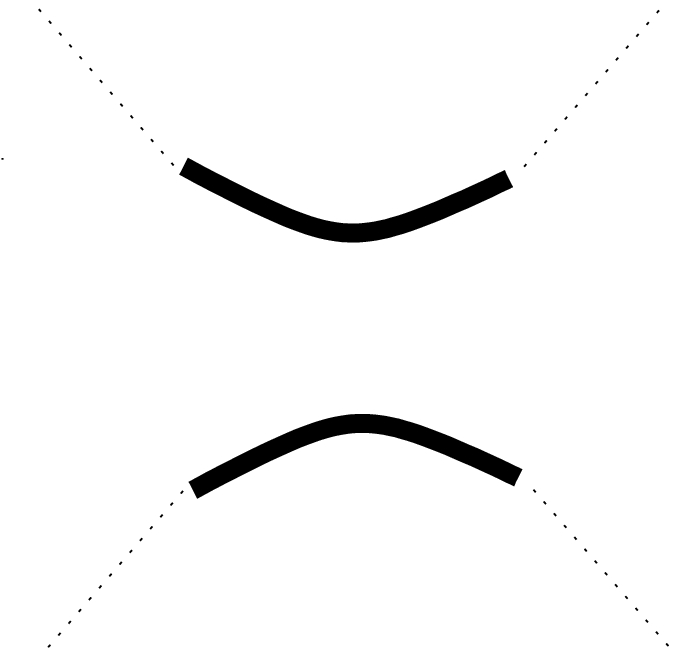}} 
\end{array} &
\begin{array}{c}
\scalebox{.5}{\psfig{figure=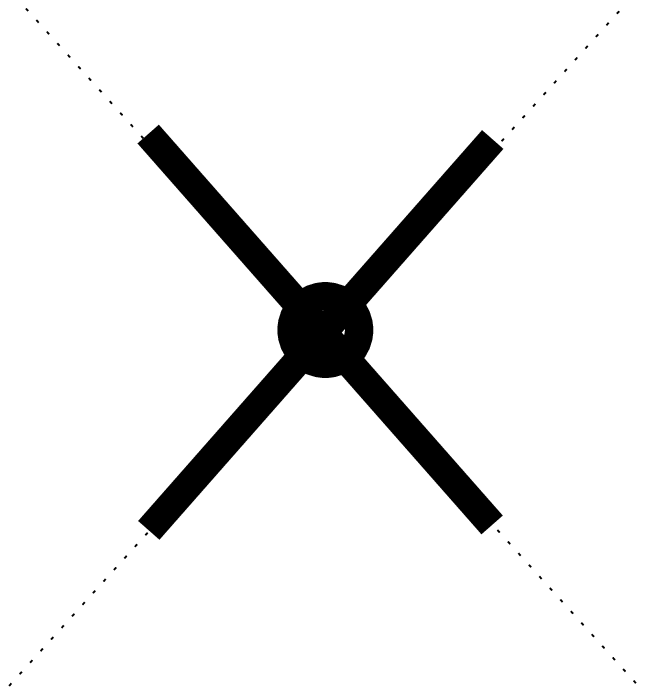}} 
\end{array}
\ar[r]
\ar[l]
&
\begin{array}{c}
\scalebox{.5}{\psfig{figure=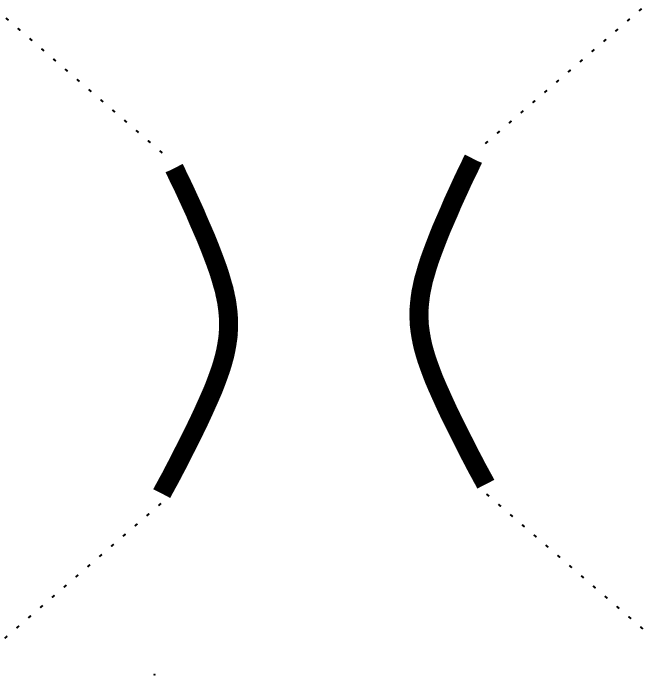}} 
\end{array}
}
\]
\caption{Two types of smoothing for a vertex of a four valent framed graph with one unicursal component.} \label{smooth_fig}
\end{figure}

\begin{definition} Let $G$ be a four valent framed graph with one unicursal component which is immersed in $\mathbb{R}^2$. Let $v$ be a vertex of $G$. By a \emph{smoothing of} $G$ \emph{at} $v$, we mean one of the two modifications of the graph $G$ given in Figure \ref{smooth_fig}. By a \emph{smoothing of} $G$ \emph{at} $S$, we mean the four valent graph obtained by smoothing at each $v \in S$, where $S$ is some subset of the vertices of $G$.
\end{definition}

\begin{definition} A free knot diagram $K$ is said to be \emph{irreducibly odd} if all crossings of $K$ are odd in the Gaussian parity and no decreasing Reidemeister 2 move may be applied to $K$. 
\end{definition}

\noindent The following theorem shows that irreducibly odd diagrams are minimal in the sense that they are ``reproduced'' in all diagrams of the free knot. It was proved by the second named author in \cite{free_knots_and_parity}. The statement below is slightly rephrased from \cite{free_knots_and_parity}.

\begin{theorem}[Manturov \cite{free_knots_and_parity}]\label{free_reproduced} Let $K$ be a four valent framed graph with one unicursal component which is immmersed in $\mathbb{R}^2$. If $K$ represents an irreducibly odd free knot diagram, then for all free knot diagrams $K'$ equivalent to $K$, there is a smoothing of $K'$ which is isomorphic as a graph to $K$.
\end{theorem}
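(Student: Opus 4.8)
The plan is to prove the theorem via the \emph{parity bracket} of the second author; this is the argument of \cite{free_knots_and_parity}, which I now sketch. Let $\mathcal{M}$ be the $\mathbb{Z}_2$-vector space freely generated by isomorphism classes of \emph{irreducible} four valent framed graphs with one unicursal component, where ``irreducible'' means: no $1$-gons and no $2$-gon to which a decreasing Reidemeister $2$ move applies. For a four valent framed graph $D$ with one unicursal component, regarded as a free knot diagram with $e(D)$ even crossings, define
\[
\langle D\rangle \;=\; \sum_{s}\, \mathrm{red}(D_s)\ \in\ \mathcal{M},
\]
where $s$ ranges over the $2^{e(D)}$ ways of replacing each even crossing of $D$ by one of its two smoothings (odd crossings being kept as vertices), $D_s$ is the resulting framed graph, and $\mathrm{red}(D_s)$ is its irreducible form: repeatedly delete $1$-gons (the curl-undoing smoothing, which is circle-free), cancel decreasing $2$-gons (the compatible circle-free pair of smoothings of the two vertices), and send to $0$ any term that becomes disconnected or a bare circle. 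That $\mathrm{red}$ is well-defined is the usual local-confluence check for this reduction system.

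First I would verify that $\langle\cdot\rangle$ is invariant under graph isomorphism and the three free Reidemeister moves, so that it becomes an invariant of free knot diagrams (the target $\mathcal{M}$ not depending on $D$). This is exactly where properties (1)--(3) of the Gaussian parity are used. For a free Reidemeister $1$ move the crossing involved is \emph{even}; its two smoothings give back $D$ and $D$ with a split-off circle, the latter being $0$, so $\langle\cdot\rangle$ is unchanged. For a free Reidemeister $2$ move the two crossings have the \emph{same} parity: if both are odd the $2$-gon they bound is decreasing, hence cancelled by $\mathrm{red}$ in every term, so the bracket is unchanged; if both are even, expanding the $2^2$ smoothings of the pair shows the ``parallel'' choice reproduces the reduced diagram, the opposite ``parallel'' choice splits off a circle and dies, and the two mixed choices give isomorphic graphs and cancel mod $2$. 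For a free Reidemeister $3$ move an \emph{even} number of the three crossings are odd: when all three are even the move becomes an identity on fully smoothed terms, exactly as for the Kauffman bracket, and when exactly one is even, smoothing that crossing both ways reduces matters to checking that the two resulting configurations agree across the move, the remaining two odd vertices being merely transported. I expect this last Reidemeister $3$ verification, together with well-definedness of $\mathrm{red}$, to be the main obstacle; the rest is bookkeeping.

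With invariance established the theorem follows quickly. Suppose $K$ is an irreducibly odd free knot diagram (and, as we may, nontrivial). Since $1$-gon crossings are always even, $K$ has no $1$-gons, and by hypothesis no decreasing $2$-gon, so $K$ is irreducible, hence a basis element of $\mathcal{M}$; and since $K$ has no even crossings the sum defining $\langle K\rangle$ has the single term $K$ itself. Thus $\langle K\rangle = K$. Now let $K'$ be any free knot diagram equivalent to $K$. By invariance $\langle K'\rangle = \langle K\rangle = K$ in $\mathcal{M}$, so writing $\langle K'\rangle = \sum_s \mathrm{red}(K'_s)$ and comparing coefficients of the basis element $K$, there is at least one smoothing $s$ of the even crossings of $K'$ with $\mathrm{red}(K'_s)\cong K$.

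Finally I would observe that the reduction $K'_s \rightsquigarrow \mathrm{red}(K'_s)$ is itself realized by smoothings: deleting a $1$-gon is a single (circle-free) smoothing of that vertex, and cancelling a decreasing $2$-gon is a (circle-free) pair of smoothings of its two vertices; since the term $\mathrm{red}(K'_s)$ is nonzero and equals the connected graph $K$, no disconnected intermediate graph occurs, so along this reduction only such smoothing steps are used. As $K'_s$ is $K'$ with its even crossings smoothed, it follows that $K$ is isomorphic as a graph to the smoothing of $K'$ at the union of the even crossings and the vertices smoothed during reduction, which is a smoothing of $K'$ in the required sense. This completes the proof.
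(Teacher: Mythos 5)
The paper itself gives no proof of this theorem --- it is quoted from \cite{free_knots_and_parity} --- and your argument is essentially the parity-bracket proof from that source: expand over smoothings of the even crossings, reduce by deleting $1$-gons and cancelling decreasing $2$-gons, verify invariance using properties (1)--(3) of the Gaussian parity, observe that an irreducibly odd $K$ has bracket equal to the single term $K$, and (the key closing point, which you state correctly) note that the reductions themselves are realized by circle-free smoothings, so the surviving term really is a smoothing of $K'$ in the sense of the theorem. The outline is sound; the two steps you flag (well-definedness/confluence of the reduction, which plays the role of uniqueness of minimal representatives under second Reidemeister moves, and the Reidemeister~3 check with two odd crossings) are exactly where the content lies and are carried out in \cite{free_knots_and_parity}, and for consistency with your module $\mathcal{M}$ you should set to zero every term with more than one unicursal component, not only disconnected or bare-circle terms.
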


\noindent The above theorem does not provide any interesting information for classical knots, since the universal parity for classical knots is the Gaussian parity \cite{IMN}. However, virtual coverings can be used to show that a non-trivial subdiagram of a knot $K$ is ``reproduced'' in every diagram of $K$ which is ``close'' to $K$. A subdiagram will be ``reproduced'' in exactly the same sense as in Theorem \ref{free_reproduced}. The imprecise notions of ``reproduced'' and ``close'' are made precise by the following definition.

\begin{definition}[$(J,\Sigma)$-bound of $K$] Let $J$ be a knot and $\Sigma$ a Seifert surface for $J$. Let $K$ be a knot in special Seifert form $(K;J,\Sigma)$. The $(J,\Sigma)$-\emph{bound} of $K$ is the set of knots $K'$ in special Seifert form $(K';J,\Sigma)$ such that $K' \leftrightharpoons K$ by an ambient isotopy $H:S^3 \times I \to S^3$ such that $H_t|_{V(J)}=\text{id}_{V(J)}$ for all $t \in I$.
\end{definition}

\begin{remark} Suppose $K'$ is in the $(J,\Sigma)$-bound of $K$ and $H$ is an ambient isotopy taking $K$ to $K'$ such that $H_t|_{V(J)}=\text{id}_{V(J)}$ for all $t \in I$. It is certainly true that $H_1(K)=K'$. However, It is quite possible that $H_1(\Sigma) \ne \Sigma$. We note that to use Theorem \ref{main_equiv_thm}, we only need that $K$ has the special Seifert form $(K;J,\Sigma)$ and $K'$ has the special Seifert form $(K';J,\Sigma)$. It is not necessary that $H_1(\Sigma)=\Sigma$ for the conclusion of Theorem \ref{main_equiv_thm} to hold.
\end{remark}

\begin{theorem}\label{classical_reproduced} Let $\hat{K}$ be a virtual cover of a knot $K$ relative to $(J,p,\Sigma)$, where $(K;J,\Sigma)$ is in special Seifert form in $\overline{S^3\backslash V(J)}$. Suppose that $K_0$ is in the $(J,\Sigma)$-bound of $K$ and that $\hat{K}_0$ is a virtual cover of $K_0$ relative to $(J,p,\Sigma)$.
\begin{enumerate}
\item If $[\hat{K}]$ is irreducibly odd, then there is a smoothing of $[\hat{K}_0]$ which is isomorphic as a graph to $[\hat{K}]$. 
\item The number of crossings of the diagram $[K;J,\Sigma]$ on $\Sigma$ is less than or equal to the number of crossings of the diagram $[K_0;J,\Sigma]$ on $\Sigma$.
\end{enumerate}
\end{theorem}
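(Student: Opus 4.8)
The plan is to reduce both assertions to Manturov's reproduction theorem (Theorem \ref{free_reproduced}) by combining the uniqueness of virtual covers for special Seifert forms (Lemma \ref{special_ok}) with the invariance result (Theorem \ref{main_equiv_thm}).

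First I would fix preferred diagrams. Since $(K;J,\Sigma)$ and $(K_0;J,\Sigma)$ are both special Seifert forms, Lemma \ref{special_ok} gives $\hat{K} \leftrightharpoons \kappa([K;J,\Sigma])$ and $\hat{K}_0 \leftrightharpoons \kappa([K_0;J,\Sigma])$, so I may work throughout with the knot diagrams $[K;J,\Sigma]$ and $[K_0;J,\Sigma]$ on $\Sigma$ and their images under $\kappa$. Next, because $K_0$ lies in the $(J,\Sigma)$-bound of $K$, there is a smooth ambient isotopy $H:S^3 \times I \to S^3$ taking $K$ to $K_0$ with $H_t|_{V(J)}=\text{id}_{V(J)}$ for all $t$; using Remark \ref{remark_lk_0_1} for the linking-number hypothesis, Theorem \ref{main_equiv_thm} applies with $K_1=K$, $K_2=K_0$ and yields $\hat{K} \leftrightharpoons \hat{K}_0$ as virtual knots.

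Now I pass to free knots. A sequence of extended Reidemeister moves between two virtual knot diagrams projects to a sequence of Gauss-diagram equivalencies and free Reidemeister moves between the underlying free knot diagrams — the virtual moves do not alter the Gauss diagram, and crossing changes and virtualizations are already quotiented out — so $[\hat{K}]$ and $[\hat{K}_0]$ are equivalent free knot diagrams. For part (1), I regard $[\hat{K}]$ and $[\hat{K}_0]$ as four valent framed graphs with one unicursal component immersed in $\mathbb{R}^2$. By hypothesis $[\hat{K}]$ is irreducibly odd, so Theorem \ref{free_reproduced}, applied with $K=[\hat{K}]$ and $K'=[\hat{K}_0]$, produces a smoothing of $[\hat{K}_0]$ that is isomorphic as a graph to $[\hat{K}]$. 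For part (2), I would note that the construction of $[K;J,\Sigma]$ on $\Sigma$, the map $\kappa$, and the projection to a free knot diagram all preserve the underlying four valent graph crossing by crossing; hence the number of crossings of $[K;J,\Sigma]$ equals the number of vertices of $[\hat{K}]$, and likewise for $K_0$. Since a smoothing at a set $S$ of vertices of a four valent framed graph deletes exactly the vertices of $S$, any smoothing of $[\hat{K}_0]$ has at most as many vertices as $[\hat{K}_0]$; combining this with the graph isomorphism from part (1) gives the asserted inequality of crossing numbers.

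I expect the main obstacle to be the careful bookkeeping in passing from the virtual-knot equivalence supplied by Theorem \ref{main_equiv_thm} to an honest equivalence of free knot diagrams in the precise sense required by Theorem \ref{free_reproduced}, and, relatedly, verifying that each of the three constructions $[\,\cdot\,;J,\Sigma]$, $\kappa$, and $[\,\cdot\,]$ induces the claimed vertex-by-vertex identification of underlying four valent graphs, so that the crossing count of $[K;J,\Sigma]$ really is the vertex count of $[\hat{K}]$.
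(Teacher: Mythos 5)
Your proposal is correct and takes essentially the same route as the paper: the authors' own proof is just the one-line observation that the result ``follows immediately from the definitions, Theorem \ref{main_equiv_thm}, and Theorem \ref{free_reproduced}'', and your argument simply spells out that chain (Lemma \ref{special_ok} to fix the representatives $\kappa([K;J,\Sigma])$, $\kappa([K_0;J,\Sigma])$, Theorem \ref{main_equiv_thm} for $\hat{K} \leftrightharpoons \hat{K}_0$, projection to free knots, and Theorem \ref{free_reproduced} plus the vertex count for the two conclusions). The extra bookkeeping you flag is exactly what the paper leaves implicit, and your handling of it is sound.
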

\begin{proof} This follows immediately from the definitions, Theorem \ref{main_equiv_thm}, and Theorem \ref{free_reproduced}.
\end{proof}

\begin{figure}[h]
\[
\xymatrix{
\begin{array}{c} \scalebox{.7}{\psfig{figure=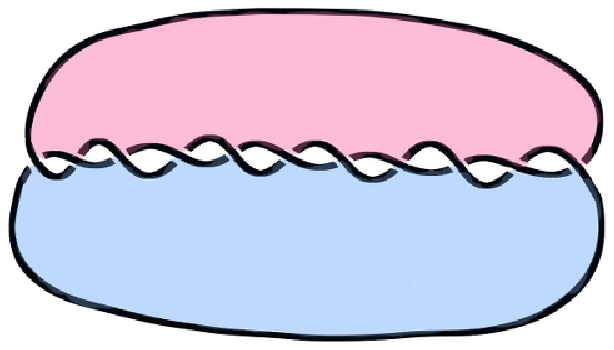}} \end{array} \ar[r] & \begin{array}{c} \scalebox{.7}{\psfig{figure=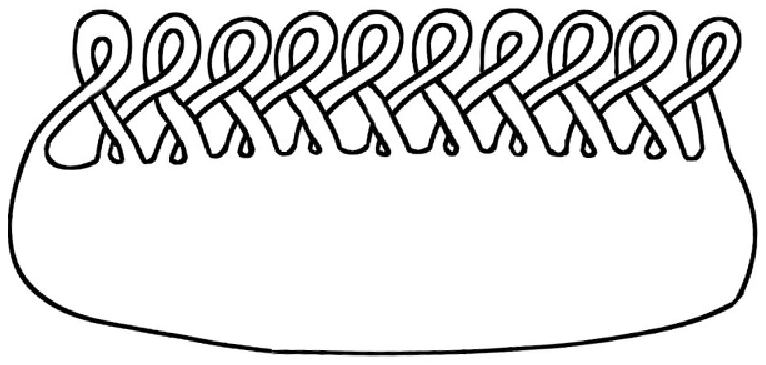}} \end{array}
}
\]
\caption{A fiber $\Sigma_0$ (left) of the fibered triple $(J_0,p_0,\Sigma_0)$ and an ambient isotopy $H:S^3 \times I \to S^3$ taking $\Sigma_0$ to $\Sigma_3$ (right). $\Sigma_0$ is a Seifert surface for $11a_{367}$ \cite{KnotInfo,KnotAtlas}.} \label{band_eleven_a_349}
\end{figure}

\begin{figure}[htb]
\scalebox{.75}{\epsfig{figure=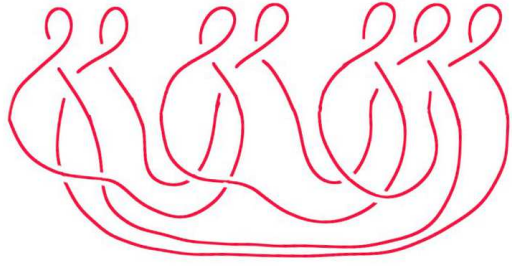}}
\caption{A diagram of $K_3$ as it appears in the given embedding in $S^3$. In $S^3$, $K_3$ is unknotted.} \label{irred_odd_unknot_fig}
\end{figure}

\begin{proposition} There exists a diagram $K_3$ of the unknot, and a special Seifert form $(K_3;J_3,\Sigma_3)$, where $J_3 \leftrightharpoons 11a_{367}$, such that if $K_3'$ is in the $(J_3,\Sigma_3)$-bound of $K_3$, there is a smoothing of $[\kappa([K_3';J_3,\Sigma_3])]$ which is isomorphic as a graph to $[\kappa([K_3;J_3,\Sigma_3])]$. Moreover, the diagram $[K_3;J_3,\Sigma_3]$ on $\Sigma_3$ has the smallest number of crossings of all diagrams $[K_3';J_3,\Sigma_3]$ on $\Sigma_3$, where $(K_3';J_3,\Sigma_3)$ is in the $(J_3,\Sigma_3)$-bound of $K_3$.
\end{proposition}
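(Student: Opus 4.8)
The plan is to exhibit the example explicitly and then reduce everything to Theorem \ref{classical_reproduced}; the substantive content is a single finite combinatorial check.

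First I would fix the fibered triple. The knot $11a_{367}$ is fibered \cite{KnotInfo,KnotAtlas}; let $(J_0,p_0,\Sigma_0)$ be a fibered triple for it, with $\Sigma_0$ the fiber surface drawn on the left of Figure \ref{band_eleven_a_349}. Exactly as in the proof of Proposition \ref{prop_fig_8}, apply the ambient isotopy $H:S^3\times I\to S^3$ of Figure \ref{band_eleven_a_349} carrying $\Sigma_0$ to $\Sigma_3$ and set $J_3=H_1(J_0)$, $p_3=p_0\circ H_1^{-1}$, $\Sigma_3=H_1(\Sigma_0)=p_3^{-1}(z_0)$; then $(J_3,p_3,\Sigma_3)$ is a fibered triple with $J_3\leftrightharpoons 11a_{367}$. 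Next I would take $K_3$ to be the diagram of the unknot in Figure \ref{irred_odd_unknot_fig}, positioned so that $\text{im}(K_3)$ lies on $\Sigma_3$ away from finitely many small $3$-cells $B_1,\dots,B_n$, in each of which $K_3$ passes over itself once. Reading the five conditions in the definition of a special Seifert form off the figure shows that $(K_3;J_3,\Sigma_3)$ is a special Seifert form, and a separate sequence of Reidemeister moves confirms that $K_3$ is the unknot in $S^3$, as asserted in Figure \ref{irred_odd_unknot_fig}.

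Second I would compute the virtual cover. By Lemma \ref{special_ok}, any virtual cover of $K_3$ relative to $(J_3,p_3,\Sigma_3)$ equals $\hat K_3=\kappa([K_3;J_3,\Sigma_3])$, where the knot diagram $[K_3;J_3,\Sigma_3]$ on $\Sigma_3$ is obtained by joining the arcs of $K_3\cap\Sigma_3$ through the disks $D_i=B_i\cap\Sigma_3$ with over/under data determined by the upper hemisphere of each $B_i$; its Gauss diagram is the one shown in Figure \ref{irred_odd_gauss_fig}. The key step is to verify that the free knot $[\hat K_3]$ is \emph{irreducibly odd}: in the chord diagram of Figure \ref{irred_odd_gauss_fig} every chord is linked with an odd number of the remaining chords (so every crossing of $\hat K_3$ is odd in the Gaussian parity), and no decreasing Reidemeister $2$ move may be applied. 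Both are finite verifications on a fixed chord diagram.

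Finally, let $K_3'$ be any knot in the $(J_3,\Sigma_3)$-bound of $K_3$; then $K_3'$ is in special Seifert form $(K_3';J_3,\Sigma_3)$ and, by Lemma \ref{special_ok} again, $\kappa([K_3';J_3,\Sigma_3])$ is a virtual cover of $K_3'$. Since $[\hat K_3]$ is irreducibly odd, part (1) of Theorem \ref{classical_reproduced} furnishes a smoothing of $[\kappa([K_3';J_3,\Sigma_3])]$ that is isomorphic as a graph to $[\kappa([K_3;J_3,\Sigma_3])]$, and part (2) of Theorem \ref{classical_reproduced} gives that the number of crossings of $[K_3;J_3,\Sigma_3]$ on $\Sigma_3$ is at most that of $[K_3';J_3,\Sigma_3]$ on $\Sigma_3$; indeed, since the number of crossings of $[K;J,\Sigma]$ equals the number of vertices of $[\kappa([K;J,\Sigma])]$ and a smoothing cannot increase that count, this minimality is already contained in part (1). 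The main obstacle is the second step: arranging $K_3$ and the cells $B_i$ so that the resulting chord diagram is genuinely irreducibly odd, and then carrying out the linking bookkeeping on it. Everything else is an application of the definitions together with Lemma \ref{special_ok} and Theorems \ref{main_equiv_thm}, \ref{free_reproduced}, and \ref{classical_reproduced}.
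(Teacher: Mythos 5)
Your proposal is correct and follows essentially the same route as the paper: build the fibered triple $(J_3,p_3,\Sigma_3)$ by an ambient isotopy of a fiber surface of $11a_{367}$, present the unknot $K_3$ in special Seifert form, read off the virtual cover via Lemma \ref{special_ok}, check from the Gauss diagram that $[\hat{K}_3]$ is irreducibly odd, and conclude by Theorem \ref{classical_reproduced} (hence Theorems \ref{main_equiv_thm} and \ref{free_reproduced}). Your added observation that the crossing-minimality claim already follows from part (1) of Theorem \ref{classical_reproduced} is a harmless refinement, not a different argument.
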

\begin{proof} The knot $11a_{367}$ is fibered. A particular fibration can be found by an obvious generalization of the fibration of the trefoil given by Rolfsen \cite{rolfsen}. Let $(J_0,p_0,\Sigma_0)$ denote a fibered triple for this fibration.  There is an ambient isotopy $H:S^3 \times I \to S^3$ taking $\Sigma_0$ to the surface $\Sigma_3$ depicted on the right hand side of Figure \ref{band_eleven_a_349}. We define a fibered triple via this ambient isotopy: $(J_3,p_3,\Sigma_3)=(H_1(J_0),p_0 \circ H_1^{-1},\Sigma_3)$.
\newline
\newline
Let $K_3$ denote the diagram of the unknot depicted in Figure \ref{irred_odd_unknot_fig}. In Figure \ref{irred_odd_surf_fig}, we have a special Seifert form $[K_3;J_3,\Sigma_3]$. By Theorem \ref{special_ok}, a virtual cover $\hat{K}_3$ of $K_3$ is given in Figure \ref{irred_odd_gamma_fig}. A Gauss diagram of $\hat{K}_3$ is given on the left hand side of Figure \ref{irred_odd_gauss_fig}, where all arrows are signed $\oplus$. Then $[\hat{K}_3]$ is irreducibly odd. The result follows from Theorems \ref{classical_reproduced} and \ref{main_equiv_thm}.
\end{proof}

\begin{figure}[htb]
\scalebox{.6}{\epsfig{figure=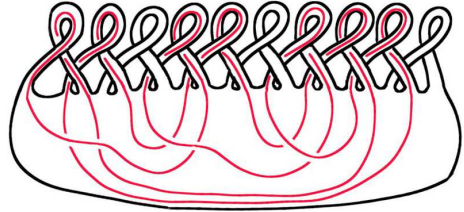}}
\caption{A special band form $[K_3;J_3,\Sigma_3]$ where $J_3$ is the fibered knot $11a_{367}$ \cite{KnotAtlas} and $K_3$ is a trivial knot in $S^3$} \label{irred_odd_surf_fig} 
\end{figure}

\begin{figure}[htb]
\scalebox{.6}{\epsfig{figure=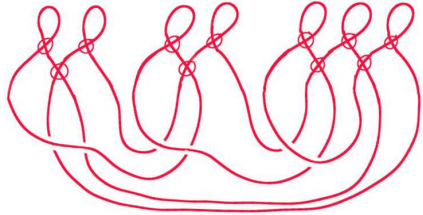}}
\caption{A diagram of $\hat{K}_3$. The free knot $[\hat{K}_3]$ is irreducibly odd.} \label{irred_odd_gamma_fig}
\end{figure}

\subsection{Isotopies of Invertible Knots} \label{sec_non_invert} Virtual covers may also be used to investigate ambient isotopies of invertible knots.  Recall that the \emph{inverse} of an oriented knot $K$ is the oriented knot obtained from $K$ by changing the orientation of the the knot. If $K$ is an oriented knot, its inverse is denoted $K^{-1}$. An oriented knot is said to be \emph{invertible} if it is ambient isotopic to its inverse. An oriented knot is said to be non-invertible if it is not invertible. Non-invertible knots were first discovered by Trotter \cite{trotter_noninvert}. Non-invertible links with invertible components were first discovered by Whitten \cite{whitten_sublinks}.

\begin{figure}[htb]
\[
\xymatrix{
\begin{array}{c} \scalebox{.6}{\psfig{figure=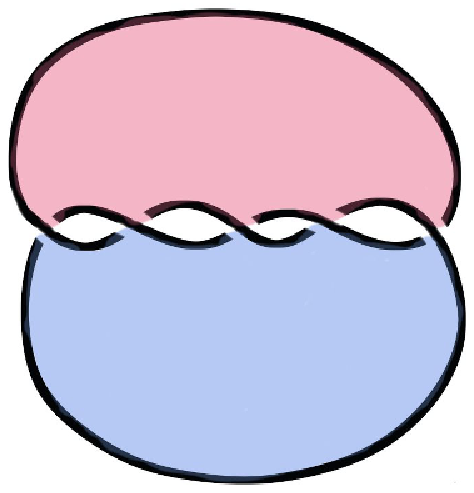}} \end{array} \ar[r]^{H} & \begin{array}{c} \scalebox{1}{\psfig{figure=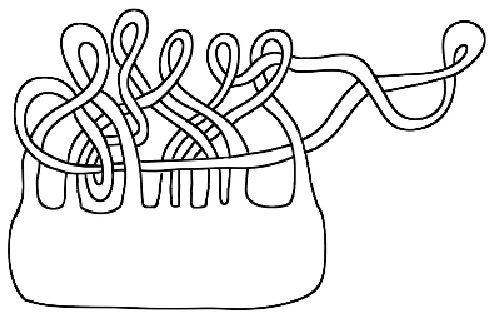}} \end{array}
}
\]
\caption{A fiber $\Sigma_0$ (left) of the fibered triple $(J_0,p_0,\Sigma_0)$ and an ambient isotopy $H:S^3 \times I \to S^3$ taking $\Sigma_0$ to $\Sigma_4$ (right).} \label{band_five_one_improper}
\end{figure}

\begin{figure}[htb]
\[
\xymatrix{
\begin{array}{c}\scalebox{.7}{\epsfig{figure=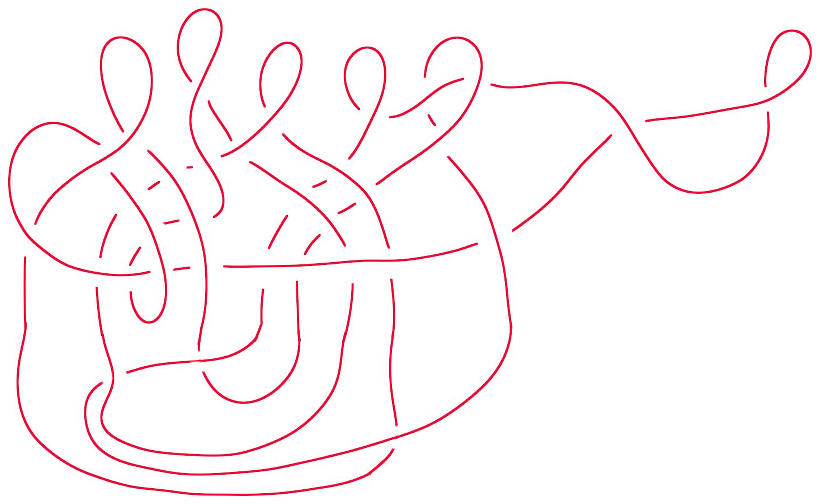}} \end{array} \ar[r] & \ar[l] \begin{array}{c}\scalebox{.5}{\epsfig{figure=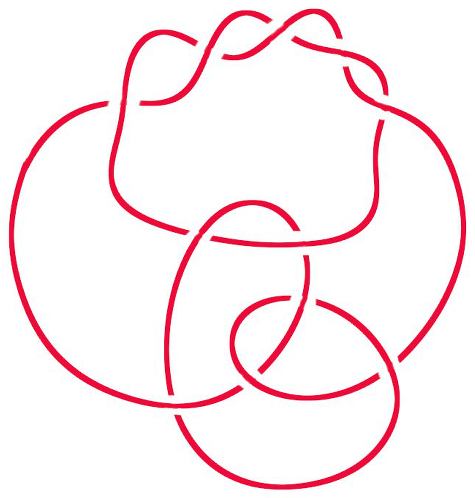}} \end{array} \\
}
\]
\caption{The classical knot $K_4$ (left) and a more recognizable diagram of $K_4$, drawn as the mirror image of $11\text{a}_9$ \cite{KnotInfo}(right).} \label{not_invert_class_fig}
\end{figure}

\begin{proposition}\label{invert_example} There is an invertible knot $K_4$ in $S^3$, and a fibered knot $J_4$ in the complement of $K_4$ such that $\text{lk}(J_4,K_4)=0$ and there is no ambient isotopy $H:S^3 \times I \to S^3$ taking $K_4$ to $K_4^{-1}$ such that $H_t|_{V(J_4)}=\text{id}_{V(J_4)}$ for all $t \in I$. The existence is satisfied by a knot $K_4$ which is equivalent to the mirror image of $11a_9$ and a fibered knot $J_4$ which is equivalent to $5_1$.  
\end{proposition}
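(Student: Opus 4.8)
The plan is to run the argument of Proposition~\ref{prop_fig_8}, but with the odd writhe replaced by an orientation-sensitive invariant of virtual knots, since $K_4$ and $K_4^{-1}$ are ambient isotopic in $S^3$ and only the constraint ``fix $V(J_4)$'' can fail. First I would record that $5_1$ is a fibered knot: it is the torus knot $T(2,5)$, and an explicit fibration is obtained by the obvious generalization of Rolfsen's fibration of the trefoil. Fix a fibered triple $(J_0,p_0,\Sigma_0)$ for this fibration and let $H:S^3\times I\to S^3$ be an ambient isotopy carrying the fiber $\Sigma_0$ to the Seifert surface $\Sigma_4$ depicted on the right of Figure~\ref{band_five_one_improper}. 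Setting $(J_4,p_4,\Sigma_4)=(H_1(J_0),\,p_0\circ H_1^{-1},\,\Sigma_4)$ produces a fibered triple with $J_4\leftrightharpoons 5_1$.

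Next I would exhibit $K_4$ in special Seifert form $(K_4;J_4,\Sigma_4)$: the curve drawn on $\Sigma_4$ in Figure~\ref{not_invert_class_fig} (left) lies in $\Sigma_4$ away from finitely many $3$-cells in the manner required by the definition, so $(K_4;J_4,\Sigma_4)$ is a special Seifert form and in particular $\text{lk}(J_4,K_4)=0$ by Remark~\ref{remark_lk_0_1}. A diagram chase identifies the underlying classical knot in $S^3$ with the mirror image of $11a_9$ (the more recognizable diagram on the right of Figure~\ref{not_invert_class_fig}), which is an invertible knot by the knot tables \cite{KnotInfo}; hence $K_4$ is invertible. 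Since the notion of special Seifert form ignores orientations of $K$, the same point-set data exhibits $K_4^{-1}$ in special Seifert form $(K_4^{-1};J_4,\Sigma_4)$, and $[K_4^{-1};J_4,\Sigma_4]$ is the diagram $[K_4;J_4,\Sigma_4]$ on $\Sigma_4$ with the orientation of the strands reversed. By Lemma~\ref{special_ok} the virtual covers of $K_4$ and of $K_4^{-1}$ relative to $(J_4,p_4,\Sigma_4)$ are therefore $\hat{K}_4=\kappa([K_4;J_4,\Sigma_4])$ and its orientation reverse $\hat{K}_4^{\,-1}$, respectively, and both are read off the band diagram exactly as in the figure-eight example.

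It then remains to prove $\hat{K}_4\not\leftrightharpoons\hat{K}_4^{\,-1}$ as virtual knots. The odd writhe cannot help here, because $\theta$ is unchanged under reversing the orientation of a knot; instead I would compute an orientation-sensitive virtual-knot invariant of $\hat{K}_4$ — for instance the affine index (writhe) polynomial, which is immediate from a Gauss diagram, or a parity refinement of the Kauffman bracket — and verify that its value on $\hat{K}_4$ is not palindromic, so that it differs from its value on $\hat{K}_4^{\,-1}$. Once $\hat{K}_4\not\leftrightharpoons\hat{K}_4^{\,-1}$ is established, Theorem~\ref{main_equiv_thm} applied with $K_1=K_4$, $K_2=K_4^{-1}$ shows that there is no ambient isotopy $H:S^3\times I\to S^3$ taking $K_4$ to $K_4^{-1}$ with $H_t|_{V(J_4)}=\text{id}_{V(J_4)}$ for all $t\in I$, since any such isotopy would force the two virtual covers to be equivalent.

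The main obstacle is the last step: the band presentation must be arranged so that $\hat{K}_4$ is genuinely a non-invertible virtual knot, and then one must produce a computable invariant that separates it from its inverse. A secondary point requiring care is the diagram chase verifying that the curve on $\Sigma_4$ really does close up in $S^3$ to the mirror of $11a_9$, and the bookkeeping that reversing the orientation of $K_4$ corresponds on the nose to the orientation reversal of $\hat{K}_4$ rather than to some combination of that with a symmetry of $\Sigma_4$.
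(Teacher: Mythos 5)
Your proposal follows the paper's proof essentially step for step: the same fibered triple $(J_4,p_4,\Sigma_4)$ obtained by isotoping a Rolfsen-style fiber of $5_1$ onto $\Sigma_4$, the same special Seifert form $(K_4;J_4,\Sigma_4)$, the same use of Lemma \ref{special_ok} to read off the virtual covers, the same reduction via Theorem \ref{main_equiv_thm}, and the same key point that the odd writhe is useless here and one must distinguish $\hat{K}_4$ from $\hat{K}_4^{-1}$ by an orientation-sensitive virtual knot invariant. The paper carries out that final step by computing the normalized Sawollek polynomial of $\hat{K}_4$ and of $\hat{K}_4^{-1}$ and checking that the two values differ, which is precisely the computation you deferred.
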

\begin{proof} It is well known that $5_1$ is a fibered knot. A particular fibration of $5_1$ can be found using a natural generalization to the case of the trefoil given by Rolfsen \cite{rolfsen}. Let $(J_0,p_0,\Sigma_0)$ denote this particular fibration, where $J_0 \leftrightharpoons 5_1$. There is an ambient isotopy $H:S^3 \times I \to S^3$ taking $\Sigma_0$ to the surface $\Sigma_4$ depicted on the right hand side of Figure \ref{band_five_one_improper}. Then $(H_1(J_0), p \circ H_1^{-1},\Sigma_4)=(J_4,p_4,\Sigma_4)$ is a fibered triple.
\newline
\newline
Let $K_4$ denote the knot on the left hand side of Figure \ref{not_invert_class_fig}. There is a sequence of Reidemeister moves taking $K_4$ to the mirror image of $11a_9$ (see right hand side of Figure \ref{not_invert_class_fig}). Figure \ref{not_invert_surf_fig} shows $K_4$ as a diagram $[K_4;J_4,\Sigma_4]$ on $\Sigma_4$. Using Theorem \ref{special_ok}, we can find the virtual covers of $K_4$ relative to $(J_4,p_4,\Sigma_4)$. The virtual cover $\hat{K}_4$ is given on the left hand side of Figure \ref{not_invert_virt_fig}. On the right hand side, a simpler diagram appears. 
\newline
\newline
We give $\hat{K}_4$ an orientation. In particular, we orient the over-crossing arc of the leftmost classical crossing in $\hat{K}_4$ from left to right. With these conventions, we compute the normalized Sawollek polynomial \cite{saw}:

\begin{eqnarray*}
\tilde{Z}_{\hat{K}_4}(x,y) &=& x^2-x^3+\frac{x^2}{y}-\frac{x^3}{y}+xy-y^2+xy^2, \\
\tilde{Z}_{\hat{K}_4^{-1}}(x,y) &=& 1-x-\frac{x^2}{y}+\frac{x^3}{y^2}-\frac{x^2}{y}+\frac{x^3}{y}+y-xy.
\end{eqnarray*}
\hspace{1cm}
\newline
The stated claim now follows from Theorem \ref{main_equiv_thm}. The simplified diagram of the knot $11a_9$ was verified by consulting KnotInfo \cite{KnotInfo} and KnotAtlas \cite{KnotAtlas}. 
\end{proof}

\begin{figure}[htb]
\scalebox{.6}{\epsfig{figure=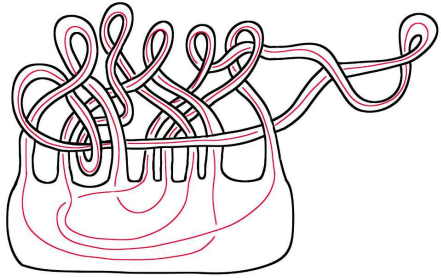}}
\caption{A special Seifert form $[K_4;J_4,\Sigma_4]$ where $K_4$ is the mirror image of $11\text{a}_9$ and $J_4$ is equivalent to $5_1$.} \label{not_invert_surf_fig} 
\end{figure}

\begin{figure}[htb]
\[
\xymatrix{
\begin{array}{c}\scalebox{.5}{\epsfig{figure=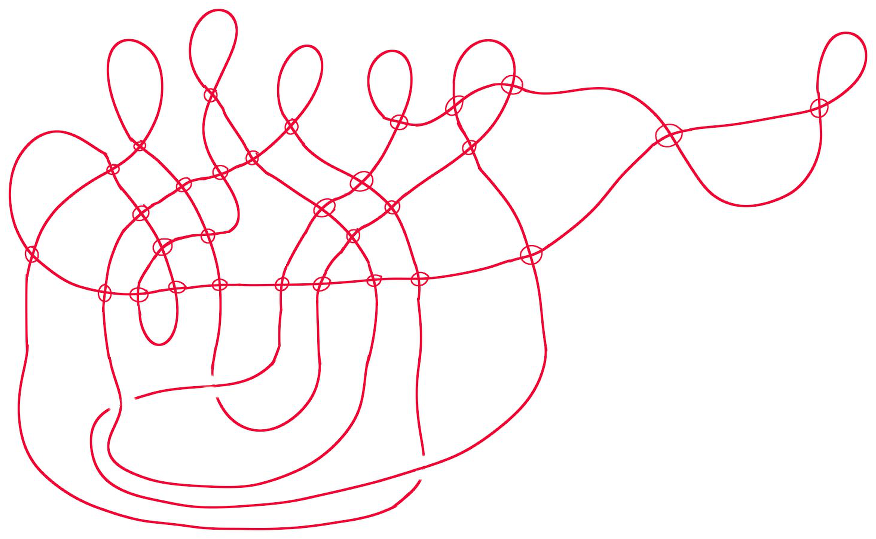}} \end{array} \ar[r] & \ar[l] \begin{array}{c}\scalebox{.5}{\epsfig{figure=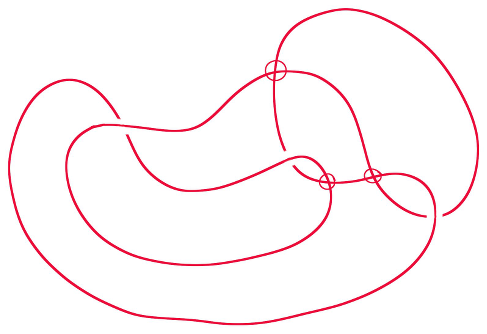}} \end{array} \\
}
\]
\caption{The virtual cover $\hat{K}_4$ of $K_4$ relative to $(J_4,p_4,\Sigma_4)$ and an equivalent virtual knot with fewer virtual crossings.} \label{not_invert_virt_fig}
\end{figure}

\section{Virtual Knots and Virtual Coverings}
\subsection{Every Virtual Knot is a Virtual Cover of Some Knot}\label{sec_every_virt} We prove that for every virtual knot $W$, there is a classical knot $K$ and a fibered triple $(J,p,\Sigma)$ such that $\text{lk}(J,K)=0$ and $W$ is a virtual cover of $K$ relative to $(J,p,\Sigma)$. The idea of the proof is to represent $W$ as a knot diagram on a surface $\Sigma$ of sufficiently high genus which is a Seifert surface of an unknot. Then we make a sequence of ``moves'' on the knot diagram and surface (simultaneously) so that the linking number and the underlying virtual knot $W$ do not change. A sequence of moves is made until we have a Seifert surface which coincides with a fiber of some fibered knot. We begin with the following definition from the literature.  

\begin{definition} A \emph{disk-band presentation} \cite{bz} of a Seifert surface $\Sigma$ of a knot $J$ in $\mathbb{R}^3$ is a decomposition $\Sigma=D^2 \cup R_1 \cup \cdots \cup R_{2h}$ of $\Sigma$ into a disk $D^2$ and $2h$ pairwise disjoint rectangles $R_i \approx I \times I$ such that $R_i \cap D^2$ consists of two disjoint arcs $a_i$, $a_i'$ in $\partial D^2$ corresponding to opposite sides of the rectangle $R_i$. Moreover, it is required that the arcs alternate around $D^2$. This means that there is a consistent labeling and a choice of basepoint on $\partial D^2$ so that the arcs appear as $a_1,a_2,a_1',a_2',a_3,\ldots,a_{2h-1},a_{2h},a_{2h-1}',a_{2h}'$ when traveling along $\partial D^2$ from the base point. Finally, for every disk-band presentation, there is a projection $r:\mathbb{R}^3 \to \mathbb{R}^2$ so that $r|_{\Sigma}$ is a local homeomorphism. The condition that $r|_{\Sigma}$ is a local homeomorphism guarantees that none of the bands of the decomposition contains any twists. It is well-known that any Seifert surface $J$ has a disk-band presentation \cite{bz}.
\end{definition}

\noindent Suppose that $(K;J,\Sigma)$ is a special Seifert form, and $\Sigma$ is given as a disk-band presentation. In this situation, we define two types of moves on the diagram $[K;J,\Sigma]$ on $\Sigma$: the loop move and the pass move.  Note that the moves are well-defined even if $J$ is not fibered. 

\begin{definition}[The Loop Move] The \emph{loop move} is the modification to $\Sigma$ and $[K;J,\Sigma]$ depicted in Figure \ref{loop_move}. The figure represents a small portion of a band of $\Sigma$ and the arcs of the knot diagram $[K;J,\Sigma]$. Outside the indicated neighborhood of the band portion, no modification is made to the surface or knot diagram. Note that the loop move does not change the homeomorphism type of $\Sigma$ but the move may make a nontrivial modification $J \to J'$ of the boundary knot so that $J \not\leftrightharpoons J'$.
\end{definition}

\begin{figure}[htb]
\[
\begin{array}{c}
\scalebox{.5}{\psfig{figure=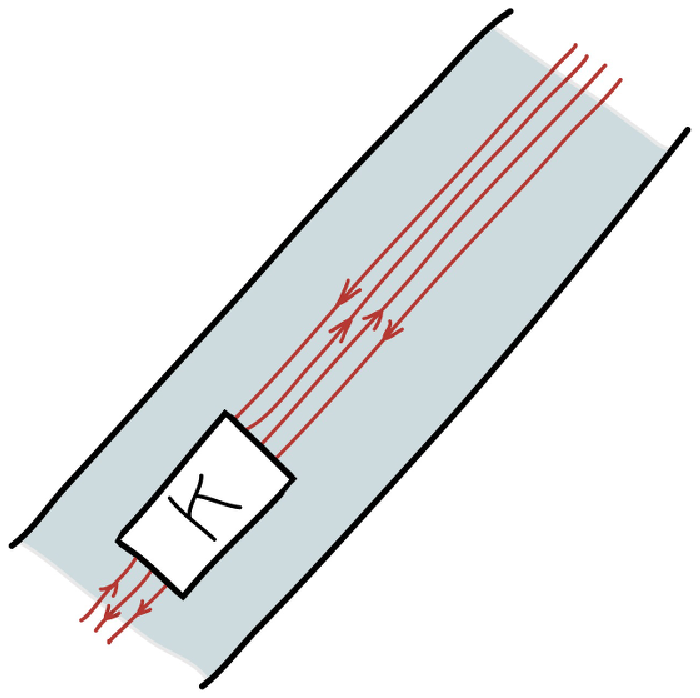}}
\end{array}
\leftrightharpoons
\begin{array}{c}
\scalebox{.5}{\psfig{figure=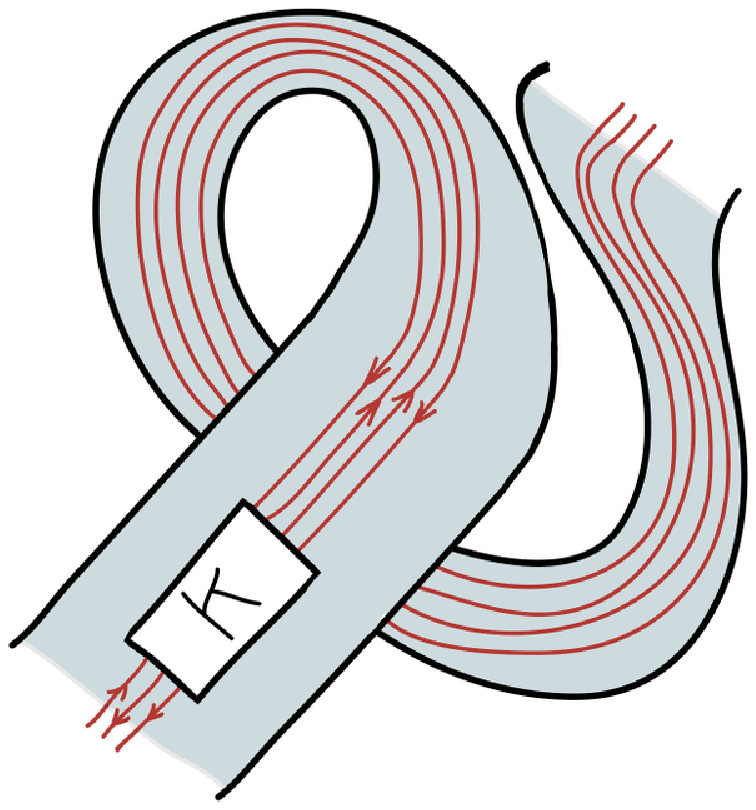}}
\end{array}
\]
\caption{The loop move.}\label{loop_move}
\end{figure}

\begin{definition}[The Pass Move] The \emph{pass move} is the modification to $\Sigma$ and $[K;J,\Sigma]$ depicted in Figure \ref{pass_move}. The figure represents a small portions of bands of $\Sigma$ and the arcs of the knot diagram $[K;J,\Sigma]$. No modification to the surface or knot diagram is made outside of the indicated neighborhood. The portions of bands may represent different portions of the same band of $\Sigma$. Note that the pass move does not change the homeomorphism type of $\Sigma$, but can make a nontrivial modification to the knot type of its boundary.
\end{definition} 

\begin{figure}[htb]
\[
\begin{array}{c}
\scalebox{.75}{\psfig{figure=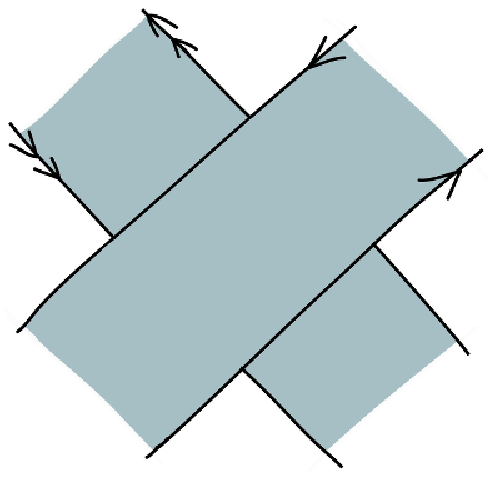}}
\end{array}
\leftrightharpoons
\begin{array}{c}
\scalebox{.75}{\psfig{figure=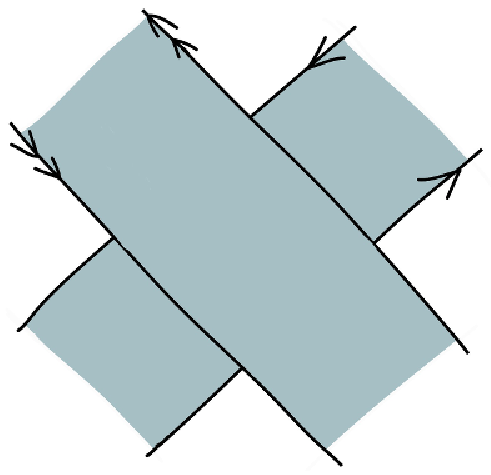}}
\end{array}
\]
\caption{The pass move on bands.}\label{pass_move}
\end{figure}

\begin{lemma} \label{pass_loop_preserve} If $[K_1;J_1,\Sigma_1]$ is obtained from $[K_0;J_0,\Sigma_0]$ by a loop move or a pass move, then we have:
\begin{eqnarray*}
\text{lk}(J_1,K_1) &=& \text{lk}(J_0,K_0), \text{ and} \\
\kappa([K_1;J_1,\Sigma_1]) &=& \kappa([K_0;J_0,\Sigma_0]).
\end{eqnarray*}
\end{lemma}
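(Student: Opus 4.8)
The plan is to verify the two claimed equalities separately, treating each move (loop, pass) by a purely local analysis, since both moves are supported in a small ball meeting the surface in a subdisk of a single band (or two band-portions). For the linking number statement, I would argue as follows. The linking number $\text{lk}(J,K)$ for $L=(K;J,\Sigma)$ can be computed combinatorially from the diagram $[K;J,\Sigma]$ on $\Sigma$: push $\Sigma$ slightly off itself to get a parallel copy, and count (with sign) the intersections of $K$ with $\Sigma$ — equivalently, this is the number promised by Remark \ref{remark_lk_0_1}/\ref{remark_lk_0_2}, namely $0$. Rather than re-deriving that value, the cleaner route is to observe directly that neither the loop move nor the pass move changes the homology class $[K] \in H_1(\overline{S^3\setminus V(J)})$ relative to the Seifert framing: in the loop move the strand of $K$ is dragged along with a portion of the band, and since the band portion itself is pushed across without crossing $J$, the algebraic intersection of $K$ with the new fiber surface is unchanged. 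For the pass move, the two band-portions involved may belong to the same band; one checks on the six strands in Figure \ref{pass_move} that the signed count of intersection points of $K$ with $\Sigma$ is preserved, because the move is a handle slide of one band portion over another and the strands of $K$ are carried rigidly. So $\text{lk}(J_1,K_1)=\text{lk}(J_0,K_0)$; in our intended applications both sides are $0$.

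For the second equality, $\kappa([K_1;J_1,\Sigma_1]) = \kappa([K_0;J_0,\Sigma_0])$, I would work entirely on the surface side using the stable-equivalence model of virtual knots and the fact (from Section \ref{sec_review}) that $\kappa$ of a knot diagram on a surface depends only on the stable equivalence class of the underlying abstract knot diagram. The key point is that both the loop move and the pass move, when restricted to their supporting neighborhood, change $\Sigma$ only by an \emph{ambient isotopy of the surface together with the diagram on it}: in the loop move the band is pushed through a finger-like isotopy of $\Sigma$ carrying the arcs of $[K;J,\Sigma]$ with it, and in the pass move one band portion is slid along a path disjoint from the rest of the diagram, again carrying the arcs. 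Since the homeomorphism type of $\Sigma$ is unchanged (this is noted explicitly in both definitions) and the diagram is transported by a homeomorphism of $\Sigma$, the resulting abstract knot diagrams on $\Sigma_0$ and $\Sigma_1$ are related by an orientation-preserving homeomorphism of the surface and a planar isotopy of the diagram — hence, by the definition of elementary/stable equivalence recalled in Section \ref{sec_review}, they represent the same virtual knot. Equivalently, one produces the same Gauss diagram: no crossings of $[K;J,\Sigma]$ are created, destroyed, or re-linked by either move, so the chord diagram with signs is literally unchanged, and $\kappa$ is computed from the Gauss diagram.

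Concretely I would structure the write-up as: (i) fix the supporting ball $B$ and note that outside $B$ the surface, its band decomposition, and the diagram are untouched; (ii) for the loop move, exhibit the ambient isotopy of $S^3$ pushing the band portion through the loop, observe it restricts to a homeomorphism $\Sigma_0 \to \Sigma_1$ taking $[K_0;J_0,\Sigma_0]$ to $[K_1;J_1,\Sigma_1]$, conclude equality of Gauss diagrams hence of $\kappa$, and conclude equality of $\text{lk}$ from the isotopy (which moves $J$ but preserves the relative homology class of $K$); (iii) repeat (ii) for the pass move, with the one extra bookkeeping remark that the two band-portions may coincide, which does not affect the locality argument. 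The main obstacle — and the only place where care is genuinely needed — is the pass move in the case that the two band-portions belong to the same band: here the "isotopy" is really a handle slide, and one must check that the arcs of $K$ lying on those portions are carried consistently and that no new self-intersections of the diagram are introduced on the surface. This amounts to inspecting Figure \ref{pass_move} and confirming that the six depicted strands match up before and after; once that local picture is verified, the global conclusion about $\kappa$ and $\text{lk}$ follows formally.
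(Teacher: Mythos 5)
Your handling of the second equality is essentially the paper's argument and is fine: both moves are supported in a portion of a band where the diagram $[K;J,\Sigma]$ consists of crossing-free parallel arcs, so the Gauss diagram of the diagram \emph{on the surface} is literally unchanged and $\kappa([K_1;J_1,\Sigma_1])=\kappa([K_0;J_0,\Sigma_0])$ follows.

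The linking-number half, however, has a genuine gap. Both your ``cleaner route'' and step (ii) of your outline rest on realizing the loop move (and the pass move) by an ambient isotopy of $S^3$ supported near the band portion, together with the claim that the band ``is pushed across without crossing $J$.'' Neither move is an ambient isotopy: as the definitions stress, each move may change the knot type of the boundary, $J_0 \not\leftrightharpoons J_1$, which no ambient isotopy of $S^3$ can do. The loop move alters the self-crossings (framing) of a band and the pass move alters which band portion lies over which, so the strands of $K$ riding on the bands \emph{do} acquire new or altered crossings with $J$ in the projection --- the assertion that nothing crosses $J$ is precisely what fails --- and ``conclude equality of $\text{lk}$ from the isotopy'' does not go through. (Your homological phrasing has the same problem: the ambient manifold $\overline{S^3\setminus V(J)}$ itself changes, so ``the class of $K$ is unchanged'' needs exactly the comparison map the nonexistent isotopy was supposed to supply.) What is needed is the local signed count: in the loop move each arc of $K$ on the band gains four crossings with $J$, and since the two boundary strands of a band are oppositely oriented as parts of $J$, these contribute two $\oplus$ and two $\ominus$ crossings and cancel; the crossings affected by a pass move likewise occur in cancelling pairs. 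Alternatively one may observe that both $(K_0;J_0,\Sigma_0)$ and $(K_1;J_1,\Sigma_1)$ are special Seifert forms and quote Remark \ref{remark_lk_0_1} (which you gesture at), but one must not appeal to Remark \ref{remark_lk_0_2}, since that remark is deduced from this very lemma; indeed the point of proving the lemma by a direct crossing count is to make Remark \ref{remark_lk_0_2} non-circular.
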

\begin{proof} In either a loop move or a pass move, the knot diagrams $[K_1;J_1,\Sigma_1]$ and $[K_0;J_0,\Sigma_0]$ have an identical Gauss diagram. This proves that the second equation holds.
\newline
\newline
The pass move certainly preserves the linking number. The first equation must be established for the loop move. Consider each of the arcs of $K_0$ depicted in the loop move. In the projection of $J_0 \sqcup K_0$, each such arc has four more crossings on the left side of the move than on the right side of the move. Now, if $J_0$ is oriented, then the sides on each band of $\Sigma$ are oppositely oriented. Hence, the four additional crossings contribute two $\oplus$ crossings and two $\ominus$ crossings. Therefore the linking number is not changed by the loop move.
\end{proof}

\begin{remark} \label{remark_lk_0_2} The previous lemma can be used to give a combinatorial proof that if $(K;J,\Sigma)$ is a special Seifert form, then $\text{lk}(J,K)=0$. Indeed, we assume that $\Sigma$ is given in disk-band presentation. We have a projection $r:\mathbb{R}^3 \to \mathbb{R}^2$ such that $r|_{\Sigma}$ is a local homeomorphism. The linking number can be computed from this projection. Note that the only contributions to the linking number occur where bands of $\Sigma$ cross one another (or themselves). It is easy to see that every $\oplus$ contribution to the linking number must then have a corresponding $\ominus$ contribution to the linking number. Whence, $\text{lk}(J,K)=0$.  
\end{remark}

\begin{figure}[htb]
\scalebox{.5}{\epsfig{figure=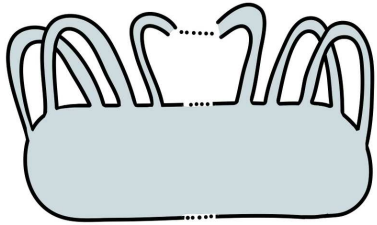}}
\caption{The surface $S_h$ used in the proof of Theorem \ref{onto}.} \label{s_h}
\end{figure}

\begin{theorem} \label{onto} For every virtual knot $W$, there is a classical knot $K$ and a fibered triple $(J,p,\Sigma)$ such that $\text{lk}(J,K)=0$ and $W$ is a virtual cover of $K$ relative to $(J,p,\Sigma)$.
\end{theorem}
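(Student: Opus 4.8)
The plan is to realize $W$ as the virtual knot $\kappa([K;J,\Sigma])$ of a special Seifert form $(K;J,\Sigma)$ in which $\Sigma$ is a fiber of a fibration $p$ of a fibered knot $J$. Once this is done, Remark \ref{remark_lk_0_1} gives $\text{lk}(J,K)=0$, and Lemma \ref{special_ok} gives that every virtual cover of $K$ relative to $(J,p,\Sigma)$ is equivalent to $\kappa([K;J,\Sigma])\leftrightharpoons W$; so the whole theorem reduces to building such a special Seifert form.

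First I would represent $W$ by an abstract knot diagram $D$ on a compact orientable surface $F$ (for instance via the band-pass presentation). By tubing together any superfluous boundary components of $F$ along arcs disjoint from $D$ — a modification that does not change the stable equivalence class, hence keeps the virtual knot equal to $W$ — I may assume $F$ has a single boundary component, of some genus $h$. Fixing a disk-band presentation $F=D^2\cup R_1\cup\cdots\cup R_{2h}$ and embedding $F$ into $S^3$ in the standard untwisted, unclasped way, I obtain a Seifert surface $\Sigma$ of an \emph{unknot} $J_0$ carrying a diagram $[K_0;J_0,\Sigma]$ of a special Seifert form $(K_0;J_0,\Sigma)$ with $\kappa([K_0;J_0,\Sigma])\leftrightharpoons W$. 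Here the $3$-cells of the special Seifert form are small balls around the classical crossings of $D$, and after pushing $D$ off a collar of $\partial\Sigma$ one may choose $V(J_0)$ disjoint from $K_0$; by Remark \ref{remark_lk_0_1}, $\text{lk}(J_0,K_0)=0$.

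Next I would apply a finite sequence of loop moves and pass moves, supported in portions of the bands $R_i$ disjoint from the arcs of $K_0$, clasping the consecutive pairs $(R_{2i-1},R_{2i})$ into positive Hopf-band pairs, so that $\Sigma$ becomes the iterated plumbing (Murasugi sum) of $h$ positive Hopf bands, i.e.\ the standard fiber surface of $J$, where $J$ is the connected sum of $h$ copies of the trefoil (the torus knot $T(2,2h+1)$ would serve equally well). By Stallings' theorem on Murasugi sums of fiber surfaces (see \cite{bz}), $J$ is fibered with fiber $\Sigma$; let $p:\overline{S^3\backslash V(J)}\to S^1$ be the corresponding fibration, so that $(J,p,\Sigma)$ is a fibered triple. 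By Lemma \ref{pass_loop_preserve} each move preserves both the linking number and the underlying virtual knot, so the resulting diagram $[K;J,\Sigma]$ satisfies $\text{lk}(J,K)=0$ and $\kappa([K;J,\Sigma])\leftrightharpoons\kappa([K_0;J_0,\Sigma])\leftrightharpoons W$. Applying Lemma \ref{special_ok} to the fibered triple $(J,p,\Sigma)$ and the special Seifert form $(K;J,\Sigma)$ finishes the proof.

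The main obstacle is the geometric step of the previous paragraph: producing an explicit sequence of loop and pass moves that converts the standardly embedded disk-with-bands into an honest fiber surface. The bookkeeping is made painless by Lemma \ref{pass_loop_preserve}, which guarantees that any such move automatically fixes both the linking number and the virtual knot class; thus the real work is purely topological, namely to verify (i) that clasping two consecutive bands into a Hopf pair can be achieved by loop/pass moves localized in sub-bands disjoint from the arcs of $K$, and (ii) that an iterated Hopf plumbing is a fiber surface, for which the cited Murasugi-sum/Stallings criterion is the precise input. Everything else is already available: Remark \ref{remark_lk_0_1} handles the linking number, and Lemma \ref{special_ok} identifies the virtual cover.
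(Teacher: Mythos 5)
Your proposal is correct and follows essentially the same route as the paper: represent $W$ as a diagram on the standard disk-band surface $S_h$, convert this to a diagram on a fiber surface of a genus-$h$ fibered knot by loop and pass moves (which, by Lemma \ref{pass_loop_preserve}, preserve the linking number and the virtual knot class), and conclude with Remark \ref{remark_lk_0_1} and Lemma \ref{special_ok}. The only cosmetic difference is that you fix the fibered knot explicitly as an iterated Hopf plumbing and invoke Stallings' criterion, whereas the paper takes an arbitrary fibered knot of genus $h_0$, isotopes its fiber into disk-band position, and asserts the required sequence of moves from the fact that both surfaces are in band presentation.
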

\begin{proof} If $W$ is classical, then the theorem follows from Theorem \ref{unlinked_implies_classical}. Suppose that $W$ is non-classical. Let $S_h$ denote the disk-band presentation of the Seifert surface of the unknot given in Figure \ref{s_h}, $S_h=D^2 \cup R_1 \cup \cdots \cup R_{2h}$. There exists an $h_0>0$ such that $W$ is represented as a knot diagram $W'$ on $S_{h_0}$.
\newline
\newline
Let $J_0$ be a fibered knot of genus $h_0$ with fibered triple $(J_0,p_0,\Sigma_0)$. We take an ambient isotopy of $H:S^3 \times I \to S^3$ so that $\Sigma_0$ is represented in band presentation. Let $J=H_1(J_0)$, $\Sigma=H_1(\Sigma_0)$, and $p=p_0 \circ H_1^{-1}$. 
\newline
\newline
Now, since both $S_{h_0}$ and $\Sigma$ are in band presentation, it follows that there is a sequence of ambient isotopies of $S^3$, loop moves, and pass moves taking $W'$ on $S_{h_0}$ to a diagram $W''$ on $\Sigma$ (see Figure \ref{onto_example_2}). The diagram $W''$ on $\Sigma$ corresponds to a knot diagram $[K;J,\Sigma]$ on $\Sigma$ of a special Seifert form $(K;J,\Sigma)$. Thus, by Lemma \ref{pass_loop_preserve}, Remarks \ref{remark_lk_0_1} and \ref{remark_lk_0_2}, and Theorem \ref{special_ok}, we have that $W$ is a virtual cover for $K$ relative to $(J,p,\Sigma)$.
\end{proof}

\begin{figure}[htb]
\[
\begin{array}{|c|c|} \hline
\underline{\text{The Kishino Knot } W:} & \underline{W \text{ as a diagram } W' \text{ on } S_2:} \\
\begin{array}{c} 
\scalebox{.65}{\psfig{figure=kishino_knot.eps}} \end{array} & \begin{array}{c} \scalebox{.65}{\psfig{figure=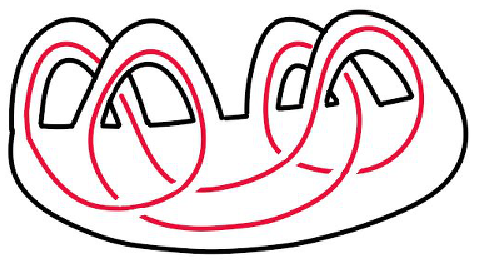}} \end{array} \\ \hline
\multicolumn{2}{|c|}{\underline{\text{A special Seifert form } [K;J,\Sigma], J \leftrightharpoons 5_1, \Sigma \text{ a fiber}:}}\\
\multicolumn{2}{|c|}{
\begin{array}{c} \scalebox{1}{\psfig{figure=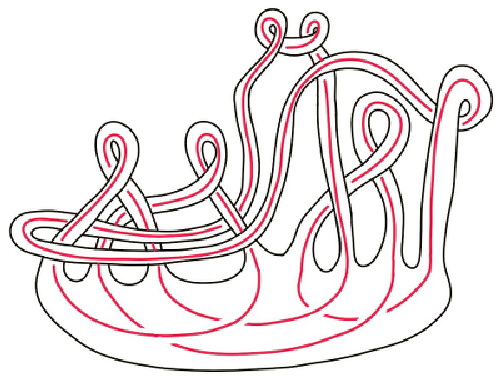}}\end{array}} \\ \hline
\end{array}
\]
\caption{Three steps in finding a classical knot having the Kishino knot as a virtual cover. Observe that $W=\kappa([K;J,\Sigma])$. } \label{onto_example_2}
\end{figure}

\subsection{Acknowledgement} The authors are indebted to the the anonymous reviewer for pointing out that the results of the present paper are better suited for the smooth category than the p.l. category. In particular, the reviewer noted that the hypothesis of local unknottedness (see Remark \ref{loc_unknot}) is not needed to prove Lemma \ref{isotopies_lift} and Theorem \ref{main_equiv_thm}. In the smooth category, these results follow from the isotopy extension theorem (see \cite{hirsch}, Chapter 8, Theorem 1.3). This observation greatly improved both the exposition and the quality of the results.

\bibliographystyle{plain}
\bibliography{bib_fiber}

\end{document}